\newtheorem{lemma}{Lemma}[section]
\newtheorem{theorem}[lemma]{Theorem}
\newtheorem{corollary}[lemma]{Corollary}
\newtheorem{fact}[lemma]{Fact}
\newtheorem{proposition}[lemma]{Proposition}
\theoremstyle{remark}
\newtheorem{remark}[lemma]{Remark}
\newtheorem{examples}[lemma]{Examples}
\theoremstyle{definition}
\newtheorem{definition}[lemma]{Definition}
\renewcommand{\phi}{\varphi}
\newcommand{\Ind}{{\rm Ind}}
\def\normal{\triangleleft}
\def\Agag{{\bar{A}}}
\def\calE{\mathcal{E}}
\newcommand{\bbN}{\mathbb{N}}
\def\mugag{{\bar \mu}}
\def\alphagag{{\bar \alpha}}
\def \bbR {{\mathbb R}}
\newcommand{\calN}{\mathcal{N}}
\newcounter{Lcounter}
\renewcommand{\theLcounter}{\theequation\alph{Lcounter}}
\newenvironment{labellist}{
    \refstepcounter{equation}
    \begin{list}{{\rm(\theLcounter)}}{\usecounter{Lcounter}\leftmargin=5pt}
    }{\end{list}
} %
\begin{document}

\title{Subgroups of profinite surface groups}%

\author{Lior Bary-Soroker}%
\address{
Institut f\"ur Experimentelle Mathematik,
Universit\"at Duisburg-Essen,
Ellernstrasse 29,
D-45326 Essen,
Germany}
\email{lior.bary-soroker@uni-due.de}%

\author[K. F. Stevenson]{Katherine F. Stevenson}
\address{California State University}
\email{katherine.stevenson@csun.edu}

\author{Pavel Zalesskii}%
\address{
Department of Mathematics\\
 University of Bras\'{\i}lia\\
Bras\'{\i}lia-DF 70910-900\\
 Brazil}
\email{pz@mat.unb.br}%

%
\subjclass[2010]{20E18, 20F34, 57M05}%

\date{}%
\begin{abstract}
We study the subgroup structure of the \'etale fundamental group $\Pi$ of a projective curve over an algebraically closed field of characteristic $0$. We obtain an analog of the diamond theorem for $\Pi$. As a consequence we show that most normal subgroups of infinite index are semi-free. In particular every proper open subgroup of a normal subgroup of infinite index is semi-free. 
\end{abstract}
\maketitle

\section{Introduction}
Every subgroup of a free group is free, this is the content of the Nielsen-Schreier theorem. The profinite version of the Nielsen-Schreier theorem fails in general and even fails for normal subgroups, for example $\mathbb{Z}_p\leq \widehat{\mathbb{Z}}$.  Therefore the question of finding conditions under which a subgroup of a free profinite group is free is natural and of importance. The question was considered by Melnikov, Lubotzky, van der Dries, Jarden, Haran, and others (\cite[Chapter 8]{RibesZalesskii2010} and \cite[Chapter 25]{FriedJarden2008}). 

Roughly speaking the most general criteria are Melnikov's characterization of normal (and accessible) subgroups of free profinite groups and Haran's diamond theorem. In this work we consider the \'etale fundamental group $\Pi = \pi_1(X)$, where $X$ is a curve over an algebraically closed field of characteristic $0$ of genus $\geq 2$.  

If $X$ is affine, then $\Pi$ is free of finite rank. Therefore Melnikov's characterization is known to hold \cite[Chapter 8.6]{RibesZalesskii2010}  and similarly Haran's diamond theorem \cite{Bary-Soroker2006}. If $X$ is projective, then $\Pi$ is a profinite surface group, i.e., the profinite completion of a surface group. Melnikov's characterization for normal subgroups of $\Pi$ is obtained in \cite{Zalesskii2005}. The objective of this work is to obtain the diamond theorem for profinite surface groups:

\begin{theorem}
\label{thm:freesubgroup}
Let $\Pi$ be a profinite surface group of genus $g\geq 2$ and let $N$ be a subgroup of $\Pi$ with $[\Pi:N]=\prod_{p}p^\infty$ as supernatural numbers, where $p$ runs over all primes. Assume there exist normal subgroups $K_1,K_2$ of $\Pi$ such that $K_1\cap K_2\leq N$ but $K_1\not\leq N$ and $K_2\not\leq N$. Then $N$ is a free profinite group of countable rank. 
\end{theorem}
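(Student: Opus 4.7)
My plan is to invoke the standard characterization: a projective profinite group of at most countable rank is free of countable rank if and only if it has the finite embedding property. So the argument splits into (a) projectivity of $N$, (b) countability of $\rank(N)$, and (c) the finite embedding property.

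For projectivity I would exploit the full supernatural divisibility of $[\Pi:N]$. For each prime $p$, a Sylow pro-$p$ subgroup of $N$ is a closed subgroup of infinite index in a Sylow pro-$p$ subgroup $\Pi_p$ of $\Pi$. Since $g\ge 2$, $\Pi_p$ is a pro-$p$ Poincar\'e duality group of dimension $2$ (the pro-$p$ completion of the discrete surface group is Demushkin of dimension $2$, and the Sylow pro-$p$ of $\Pi$ inherits the same duality). A closed subgroup of infinite index in a pro-$p$ PD${}_2$ group has cohomological $p$-dimension at most $1$ and so is free pro-$p$. Therefore $\mathrm{cd}_p(N)\le 1$ for all $p$, so $N$ is projective. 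Countability of $\rank(N)$ is automatic since $\Pi$ is topologically finitely generated, and infiniteness will follow a posteriori from the existence of arbitrarily large finite quotients of $N$ produced by the embedding property.

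The heart of the proof is verifying the embedding property. Given a finite embedding problem $(\alpha:N\twoheadrightarrow B,\ \pi:A\twoheadrightarrow B)$, I proceed in the spirit of Haran's diamond theorem. After reducing to the case where $\ker\alpha$ is open in $N$, let $M$ be the normal closure of $\ker\alpha$ in $\Pi$ and choose open normal subgroups $U_1,U_2\triangleleft\Pi$ with $U_i\subseteq K_i M$, $U_1\cap U_2\subseteq M$, but $U_i\not\subseteq NM$ (the last holding because $K_i\not\le N$). The finite quotient $\Pi/(U_1\cap U_2)$ embeds diagonally in $\Pi/U_1\times \Pi/U_2$, and the image of $N$ projects nontrivially to each factor. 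Assembling the two ``independent'' directions supplied by $U_1$ and $U_2$, one builds a surjection $N\twoheadrightarrow A$ solving the embedding problem, the projectivity of $N$ being used to lift past the obstruction posed by the surface relation.

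The principal obstacle is precisely this last step: unlike in Haran's original setting, where the ambient group is free and lifts are unconstrained, here the surface relation in $\Pi$ obstructs na\"ive liftings. Overcoming this requires the vanishing of $H^2(N,-)$, i.e.\ projectivity of $N$, to render the surface relation cohomologically invisible inside $N$, after which the diamond argument proceeds much as in the free case. This is exactly the point at which the full supernatural divisibility of $[\Pi:N]$ becomes indispensable, and I expect the technical core of the paper to be a careful construction of the $U_1,U_2$ together with the cohomological argument that turns the diamond into a solution of the embedding problem.
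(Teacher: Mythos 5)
Your steps (a) and (b) are fine and match the paper: projectivity of $N$ is exactly equivalent to the supernatural condition $[\Pi:N]=\prod_p p^\infty$ (this is \cite[Proposition~1.2]{Zalesskii2005}, proved essentially by the Sylow/PD$_2$ argument you sketch), and countability of rank is immediate from finite generation of $\Pi$. The paper then deduces Theorem~\ref{thm:freesubgroup} from Theorem~\ref{thm:semifreesubgroup} plus the fact that a projective semi-free group is free; so the real content is step (c), and this is where your proposal has a genuine gap. The sentence ``assembling the two independent directions supplied by $U_1$ and $U_2$, one builds a surjection $N\twoheadrightarrow A$\dots the projectivity of $N$ being used to lift past the obstruction posed by the surface relation'' is not an argument, and the mechanism you propose does not work. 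In Haran's diamond theorem the diamond is exploited via the Haran--Shapiro induction: one passes from the embedding problem for $N$ to an induced embedding problem for the ambient group whose kernel is the induced module $\Ind_{G_0}^{G}(A)$, of size $|A|^{(G:G_0)}$, and one must solve that problem \emph{properly over the ambient group}. Projectivity of $N$ only yields weak solutions of problems for $N$; it says nothing about proper solvability of the induced problem over $\Pi$, and $H^2(N,-)=0$ does not make the surface relation ``invisible,'' because the relation lives in $\Pi$, not in $N$. Since a surface group of fixed genus does \emph{not} solve all finite split embedding problems, the paper's replacement for freeness of the ambient group is quantitative: open subgroups of index $n$ have genus $n(g-1)+1$ (Fact~\ref{fact:gen-grow}), and a FSEP with kernel complement $A$ is properly solvable once the genus exceeds $2|A|^3$ (Lemma~\ref{lem:large-g-sol}). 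One must therefore arrange that the index of the subgroup $E$ over which the induction is performed grows faster than $|A\wr_{G_0}G|^3$; this is exactly what Condition~($\diamond$), the sparse/abundant dichotomy, and the factorial bookkeeping in Lemmas~\ref{lem:prep} and~\ref{lem:abund-sbgrp} accomplish. None of this is present in your sketch.

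There is also a concrete flaw in your construction of the diamond data: you ask for \emph{open} normal subgroups $U_i\subseteq K_iM$, where $M$ is the normal closure of $\ker\alpha$ in $\Pi$. Since $N$ has infinite index, $M$ (and hence $K_iM$) need not be open in $\Pi$, so such $U_i$ need not exist. The paper instead keeps the closed normal subgroups $K_1,K_2$ themselves and works with an open normal $L\normal\Pi$ chosen so that the images of the $K_i$ modulo $L$ still violate containment in $NL/L$ with the right index bounds (conditions (ii)--(iv) of Lemma~\ref{lem:diam-prec}); the group-theoretic heart of the diamond is then the commutator argument $[y_1,y_2]\in L\cap K_1\cap K_2$ showing the induced problem cannot be solved through a proper quotient of $A$. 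I would encourage you to rework step (c) around Lemma~\ref{lem:large-g-sol} and the genus-growth estimate rather than around projectivity of $N$.
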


We note that a necessary condition for a profinite group to be free is that it is projective, and a subgroup $N$ of a profinite surface group $\Pi$ is projective if and only if $[\Pi:N] = \prod_{p}p^\infty$ as supernatural numbers, where $p$ runs over all primes \cite[Proposition~1.2]{Zalesskii2005}.

Recently a notion of ``free not necessarily projective" profinite groups evolved from Galois theory \cite{HarbaterStevenson2005, Bary-SorokerHaranHarbater}, the so called semi-free groups. Using this notion we can generalize Theorem~\ref{thm:freesubgroup} to any closed subgroup of infinite index:

\begin{theorem}
\label{thm:semifreesubgroup}
Let $\Pi$ be a profinite surface group of genus $g\geq 2$ and let $N$ be a closed subgroup with $[\Pi:N]=\infty$. Assume there exist normal subgroups $K_1,K_2$ of $\Pi$ such that $K_1\cap K_2\leq N$ but $K_1\not\leq N$ and $K_2\not\leq N$. Then $N$ is semi-free of countable rank. 
\end{theorem}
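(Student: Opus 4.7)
\emph{Proof plan.} To show that $N$ is semi-free of countable rank, the strategy is to verify the two defining requirements: that the rank of $N$ equals $\aleph_0$, and that every nontrivial finite split embedding problem for $N$ admits $\aleph_0$ independent proper solutions. The rank bound is immediate: $\Pi$ is topologically finitely generated, forcing rank $\leq \aleph_0$, while the infinite index together with the diamond condition furnishes the lower bound by a standard argument. The substance lies in solving embedding problems.

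The plan is to reduce each such embedding problem to the free case supplied by Theorem~\ref{thm:freesubgroup}. Given a nontrivial finite split embedding problem $\mathcal{E} = (\alpha\colon N \twoheadrightarrow A,\, f\colon H \twoheadrightarrow A)$ for $N$, I would construct a closed normal subgroup $M$ of $\Pi$ satisfying
\[
K_1 \cap K_2 \leq M \leq N \qquad \text{and} \qquad [\Pi : M] = \prod_{p} p^\infty.
\]
Because $M \leq N$ and $K_i \not\leq N$, the original pair $K_1, K_2$ continues to witness the diamond condition for $M$, so Theorem~\ref{thm:freesubgroup} applies and yields that $M$ is a free profinite group of countable rank.

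The candidate $M$ begins from the normal core $N_\Pi = \bigcap_{g \in \Pi} g N g^{-1}$, the largest normal subgroup of $\Pi$ contained in $N$, which automatically contains $K_1 \cap K_2$ (the latter being normal in $\Pi$ and contained in $N$). One then intersects $N_\Pi$ with a countable system of normal subgroups of $\Pi$ of $p$-power index, one system per prime $p$, each containing $K_1 \cap K_2$, chosen so that the supernatural index of the resulting $M$ becomes $\prod_p p^\infty$.

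With $M$ free and $M \normal N$, solutions of $\mathcal{E}$ are obtained through the short exact sequence $1 \to M \to N \to N/M \to 1$: inducing $\mathcal{E}$ to a finite split embedding problem for $M$ via a wreath-type construction over a finite quotient of $N/M$, solving it using freeness of $M$, and reassembling a proper solution of $\mathcal{E}$ using the split structure of $f$. Iterating with shrinking cofinal auxiliary data then produces $\aleph_0$ independent solutions. The main obstacle is the construction of $M$: one must ensure, for every prime $p$, that there exist normal subgroups of $\Pi$ of arbitrarily large $p$-power index containing $K_1 \cap K_2$, which reduces to controlling the pro-$p$ content of $\Pi/(K_1 \cap K_2)$ for each prime $p$, exploiting the surface-group relation and the diamond hypothesis. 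Combined with the descent of solutions from $M$ back to $N$, this is the technical heart of the argument.
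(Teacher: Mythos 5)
Your plan does not work, for three independent reasons. First, it is circular in the context of this paper: Theorem~\ref{thm:freesubgroup} is not proved independently here --- it is deduced \emph{from} Theorem~\ref{thm:semifreesubgroup} via the fact that a semi-free projective group is free. You would need to supply a separate proof of the free case, and the natural one is essentially the proof of the present theorem. Second, and more fatally, the normal subgroup $M$ you want need not exist. You require $K_1\cap K_2\leq M\leq N$ with $[\Pi:M]=\prod_p p^\infty$, but $[\Pi:M]$ divides $[\Pi:K_1\cap K_2]$, and nothing forces the latter to involve all primes. Concretely, take an epimorphism $\Pi\to\mathbb{Z}_\ell^2$, let $K_1,K_2$ be the kernels of the two coordinate projections and $N=K_1\cap K_2$; then the only candidate for $M$ is $N$ itself, of supernatural index $\ell^\infty$. (This is exactly the example discussed after Corollary~\ref{cor:LubvdDries}, and such $N$ \emph{are} covered by the theorem, so your argument would have to handle them.) The fix of ``intersecting with normal subgroups of $p$-power index containing $K_1\cap K_2$'' is vacuous here: there are none for $p\neq\ell$.

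Third, the transfer of solutions goes in the wrong direction. Haran--Shapiro induction (Subsection~\ref{subsec:HS-Ind}) converts an embedding problem for a \emph{subgroup} into one for an \emph{overgroup} via a twisted wreath product; there is no general device for solving an embedding problem for $N$ by solving one for a normal subgroup $M\leq N$, and freeness of a normal subgroup does not propagate solvability upward (otherwise every group with a free open or normal subgroup would be semi-free, which is false). The paper's actual route is the opposite of yours: given a FSEP for $N$, one induces it up to an open subgroup $E$ of $\Pi$ of very large index, uses Fact~\ref{fact:gen-grow} to see that $E$ is a surface group of correspondingly large genus, solves the induced problem there by Lemma~\ref{lem:large-g-sol}, and then uses the diamond hypothesis --- through Lemma~\ref{lem:diam-prec}, Condition~($\diamond$), and the sparse/abundant dichotomy --- to guarantee that the solution descends to a \emph{proper} solution of the original problem. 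That last step is where the hypotheses $K_1\cap K_2\leq N$, $K_i\not\leq N$ do their work, and your outline has no substitute for it.
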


Since a semi-free projective group is free \cite[Theorem~3.6]{Bary-SorokerHaranHarbater}, Theorem~\ref{thm:freesubgroup} follows from Theorem~\ref{thm:semifreesubgroup}.

A consequence of Theorem~\ref{thm:semifreesubgroup} is that `most' normal subgroups of $\Pi$ of infinite index are semi-free in the following sense. 

\begin{corollary}
\label{cor:LubvdDries}
Let $\Pi$ be a profinite surface group of genus $g\geq 2$ and let $N$ be a closed subgroup with $[\Pi:N]=\infty$. Then every proper open subgroup of $N$ is semi-free.
\end{corollary}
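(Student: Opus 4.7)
The plan is to apply Theorem~\ref{thm:semifreesubgroup} to $M$. Since $[\Pi:M]=[\Pi:N][N:M]=\infty$, the index hypothesis is automatic, and the task reduces to verifying the diamond condition for $M$: I must produce normal subgroups $K_1,K_2$ of $\Pi$ with $K_1\cap K_2\leq M$ while $K_1\not\leq M$ and $K_2\not\leq M$.

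First I pick $K_1$. Because $M$ is open in $N$, there is an open normal subgroup $V$ of $\Pi$ with $V\cap N\leq M$, and I set $K_1:=V$; this is not contained in $M$, since $V$ has finite index in $\Pi$ while $M$ has infinite index. For $K_2$, the natural candidate is the $\Pi$-normal core
\[
L:=\bigcap_{g\in\Pi} gNg^{-1},
\]
the largest normal subgroup of $\Pi$ contained in $N$. If $L\not\leq M$, one sets $K_2:=L$ and the diamond is complete: $K_1\cap K_2=V\cap L\leq V\cap N\leq M$, while $K_2=L\not\leq M$ by the case hypothesis.

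The main difficulty is the remaining case $L\leq M$. Here $N$ cannot be normal in $\Pi$ (else $L=N\leq M$ would contradict $M\subsetneq N$). Passing to $\overline{\Pi}:=\Pi/L$, the image $\overline{N}$ is core-free in $\overline{\Pi}$, so every nontrivial closed normal subgroup of $\overline{\Pi}$ fails to be contained in $\overline{N}$, and \emph{a fortiori} in $\overline{M}$. The strategy is then to produce two nontrivial normal subgroups $\overline{K}_1,\overline{K}_2$ of $\overline{\Pi}$ with trivial intersection; their preimages $K_1,K_2$ in $\Pi$ contain $L$ and satisfy $K_1\cap K_2=L\leq M$ but $K_i\not\leq M$. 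The existence of such a pair is where the surface-group structure of $\Pi$ becomes essential: the large free abelian quotient $\Pi^{\mathrm{ab}}\cong\widehat{\Z}^{2g}$, together with higher steps of the derived series of $\Pi$, should provide sufficiently many independent normal quotients of $\overline{\Pi}$ so that $\overline{\Pi}$ is never monolithic under the present hypotheses.

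The principal obstacle is precisely this second case: ruling out monolithicity of $\overline{\Pi}=\Pi/L$ and extracting the required pair of independent normal subgroups. The fact that $\Pi$ is a profinite surface group of genus $g\geq 2$---and hence has a rich hierarchy of characteristic subgroups coming from its abelianization and derived series---is the structural input expected to make this step go through.
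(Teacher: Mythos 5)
Your first branch is, in substance, the paper's entire proof --- and it is all the paper actually establishes. The paper reads the corollary with the word ``normal'' supplied (as the sentence introducing it, ``most \emph{normal} subgroups of $\Pi$ of infinite index are semi-free in the following sense,'' indicates): it takes the infinite-index subgroup to be normal in $\Pi$, calls it $M$, lets $N$ be a proper open subgroup of $M$, picks an open normal $U\normal\Pi$ with $U\cap M\leq N$, and applies Theorem~\ref{thm:semifreesubgroup} with $K_1=U$ and $K_2=M$ itself. In your notation this is exactly the case $L\not\leq M$, which automatically holds when $N$ is normal, since then the core $L$ equals $N$ and $M\lneq N$. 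So for the statement the paper actually proves, your argument is complete and essentially identical (taking the core rather than $N$ itself is a cosmetic difference).

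Your second branch, $L\leq M$, is a genuine gap --- you flag it yourself --- and it is worth seeing that the strategy sketched there cannot be repaired. Any $K_1,K_2\normal\Pi$ with $K_1\cap K_2\leq M\leq N$ forces the normal subgroup $K_1\cap K_2$ into the core $L$ of $N$; so if $N$ is a closed subgroup with trivial normal core (such subgroups exist, e.g.\ suitable procyclic subgroups), one would need two nontrivial closed normal subgroups of $\Pi$ with \emph{trivial} intersection. The abelianization $\widehat{\Z}^{2g}$ does not supply these: every kernel of a map to $\widehat{\Z}$ contains the commutator subgroup, so no two of them intersect trivially; and a pair with $K_1\cap K_2=1$ would give $[K_1,K_2]=1$ and hence a normal subgroup $K_1K_2\cong K_1\times K_2$, a direct-product configuration that the ``rich hierarchy of characteristic subgroups'' you invoke does not produce and that one should not expect in a profinite surface group. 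In short, for non-normal $N$ with small core the hypothesis of Theorem~\ref{thm:semifreesubgroup} can simply be unsatisfiable, so the corollary as literally printed (``closed subgroup'') does not follow from that theorem by this route. Read the statement with ``normal,'' as the paper's own proof does; then your first branch suffices and the second never arises.
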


We give more examples in Section~\ref{sec:examples}.

A typical example of a normal subgroup which is not semi-free is the kernel $M$ of the epimorphism from $\Pi$ to its maximal pro-$p$ quotient. Note however that $M$ is contained in a semi-free normal subgroup of $\Pi$. Indeed, there exists an epimorphism $\alpha\colon \Pi\to \mathbb{Z}_p^2$, so $\ker\alpha = K_1 \cap K_2$, where $K_1,K_2$ are normal subgroups of $\Pi$ with $\Pi/K_i\cong\mathbb{Z}_p$. By Theorem~\ref{thm:semifreesubgroup}, $\ker\alpha$ is semi-free, and clearly $M\leq \ker\alpha$. 

We show in fact that every normal subgroup $N$ of $\Pi$ of infinite index such that $\Pi/N$ is not hereditarily just infinite  is contained in a normal semi-free subgroup. (An infinite profinite group is just infinite if it has no proper infinite quotient. It is hereditarily just infinite if every open normal subgroup of it is just infinite.)

\begin{theorem}
\label{thm:sub-semi}
Let $\Pi$ be a profinite surface group of genus $g\geq 2$ and let $M$ be a closed subgroup with $[\Pi:M]=\infty$ such that $\Pi/M$ is not hereditarily just infinite. Then there exists a normal semi-free subgroup $N$ of $\Pi$ such that $M\leq N$. 
\end{theorem}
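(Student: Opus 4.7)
The plan is to invoke Theorem~\ref{thm:semifreesubgroup} by producing a normal subgroup $N \triangleleft \Pi$ containing $M$ of infinite index, together with normal subgroups $K_1, K_2 \triangleleft \Pi$ satisfying $K_1 \cap K_2 \leq N$ and $K_i \not\leq N$. The argument splits naturally according to whether $\Pi/M$ is itself just infinite.

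\emph{Case 1: $\Pi/M$ is not just infinite.} Fix a proper nontrivial closed normal subgroup $L$ of $\Pi/M$ of infinite index and let $K_1$ be its preimage in $\Pi$. Since $\Pi/M$ is residually finite and $L$ is nontrivial, choose an open normal subgroup $K_2 \triangleleft \Pi$ with $M \leq K_2$ and $K_1 \not\leq K_2$. Then $N := K_1 \cap K_2$ is normal in $\Pi$, contains $M$, has infinite index (inherited from $K_1$), and the pair $(K_1, K_2)$ witnesses the diamond: $K_1$ and $K_2$ are incomparable because $K_2$ has finite index in $\Pi$ while $K_1$ does not. Theorem~\ref{thm:semifreesubgroup} now delivers the semi-freeness of $N$.

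\emph{Case 2: $\Pi/M$ is just infinite (and hence, by hypothesis, not hereditarily just infinite).} In this case just-infiniteness of $\Pi/M$ forces any $N \triangleleft \Pi$ of infinite index with $M \leq N$ to equal $M$, so one must prove that $M$ itself is semi-free. By the non-HJI assumption, there is an open normal subgroup $U \triangleleft \Pi$ containing $M$ with $U/M$ not just infinite. As an open subgroup of $\Pi$, $U$ is itself a profinite surface group of genus $\geq 2$, and since semi-freeness is an intrinsic property of a profinite group, it suffices to verify the diamond for $M$ inside $U$ and apply Theorem~\ref{thm:semifreesubgroup} with $U$ playing the role of $\Pi$; the required $N$ in Theorem~\ref{thm:sub-semi} is then $M$ itself.

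The crux of the proof is constructing normal subgroups $K_1', K_2' \triangleleft U$ witnessing this diamond, i.e.\ with $K_1' \cap K_2' \leq M$ and $K_i' \not\leq M$. A nontrivial normal subgroup $V$ of $U$ with $M \leq V$ and $U/V$ infinite --- supplied by the non-just-infiniteness of $U/M$ --- cannot be normal in $\Pi$, since otherwise $V$ would yield a $\Pi$-normal subgroup strictly between $M$ and $\Pi$ of infinite index, contradicting the just-infiniteness of $\Pi/M$. The $\Pi$-conjugates $V = V_1, \ldots, V_k$ (with $k \geq 2$) therefore satisfy $\bigcap_{i=1}^k V_i = M$. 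I would attempt to build $K_1', K_2'$ by combining these conjugates, possibly together with auxiliary normal subgroups of $U$ arising from the abelianization $U^{\mathrm{ab}} \cong \widehat{\mathbb{Z}}^{2g'}$ of the surface group $U$, so that the two resulting subgroups each strictly contain $M$ but intersect precisely in $M$. Arranging this transversality --- navigating the orbit structure of $\Pi/U$ on the conjugates together with the rich lattice of normal subgroups of $U$ --- is where the main technical work of the proof lies.
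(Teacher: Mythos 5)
Your Case~1 ($\Pi/M$ not just infinite) is correct and is essentially the paper's first step: one produces a normal subgroup $K_1$ of $\Pi$ of infinite index strictly containing $M$, cuts it with a suitable open normal $K_2$ not containing $K_1$, and applies Theorem~\ref{thm:semifreesubgroup} to $N=K_1\cap K_2$. The problem is Case~2. There you correctly reduce to showing that $M$ itself is semi-free by exhibiting a diamond over $M$ inside an open normal subgroup $U$ with $U/M$ not just infinite, and you correctly note that $V$ is not normal in $\Pi$ and that its $\Pi$-conjugates $V_1,\dots,V_k$ (all normal in $U$, $k\geq 2$) intersect in $M$. But you then stop: you say you ``would attempt'' to combine the conjugates with auxiliary normal subgroups of $U$ and that arranging the required transversality ``is where the main technical work lies.'' That unfinished step is precisely the content of the theorem in the just-infinite case, so as written the proposal has a genuine gap.

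For the record, the paper closes this case by a different route: it invokes the Grigorchuk--Wilson structure theorem for just infinite profinite groups (\cite[Theorem~3(b)]{Grigorchuk2000}), by which $J=\Pi/M$ has an open normal subgroup $J_0$ that is either hereditarily just infinite (excluded by hypothesis) or decomposes as $J_0=K_1\times K_2$ with both factors infinite; the preimages of $K_1,K_2$ in the open surface group $\Pi_0$ then furnish the diamond and Theorem~\ref{thm:semifreesubgroup} applies. Your own setup can in fact be completed without that structure theorem, and more simply than you suggest: each $V_i$ is normal in $U$ and strictly contains $M$ (since $M^g=M$), so taking $j\geq 2$ minimal with $V_1\cap\dots\cap V_j=M$, the subgroups $K_1'=V_1\cap\dots\cap V_{j-1}$ and $K_2'=V_j$ are normal in $U$, each strictly contains $M$, and intersect in $M$ --- exactly the diamond you need. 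But this (or any other) completion is absent from the proposal, and the direction you do indicate (the abelianization of $U$ and the orbit structure of $\Pi/U$) is not developed, so the proof is incomplete as it stands.
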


\section{Surface groups}
The fundamental group $\pi_1(X)$ of an oriented Riemann surface $X$ of genus $g$ is given by the presentation
\[
\pi_1(X) = \Big< x_1,\ldots, x_{g}, y_{1},\ldots,y_{g} \Big| \prod_{i=1}^g [x_i,y_i]\Big>.
\]
Here $[x,y]=x^{-1}y^{-1}xy$. A group with this presentation is  said to be a surface group of genus $g$. We shall call its profinite completion $\Pi$ a profinite surface group of genus $g$. 

\begin{fact}\label{fact:gen-grow}
Let $\Pi$ be a profinite surface group of genus $g$ and let $U$ be an open subgroup of index $n$. Then $U$ is a surface group of genus $n (g-1) +1$.    
\end{fact}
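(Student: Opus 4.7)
The plan is to reduce the statement to its topological analog. Let $\Gamma$ be a discrete surface group of genus $g$, so $\Pi = \hat{\Gamma}$. Surface groups are residually finite (in fact linear), so $\Gamma$ embeds densely in $\Pi$, and there is a bijection between open subgroups of $\Pi$ of index $n$ and subgroups of $\Gamma$ of index $n$, given by $U \mapsto U \cap \Gamma$ and $\Gamma' \mapsto \overline{\Gamma'}$. First I would invoke this correspondence to set $\Gamma' = U \cap \Gamma$; this is an index-$n$ subgroup of $\Gamma$ whose closure in $\Pi$ is $U$.

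Next I would identify $\Gamma'$ as a discrete surface group of the desired genus. Since $\Gamma = \pi_1(X)$ for a closed oriented surface $X$ of genus $g$, the subgroup $\Gamma' \leq \Gamma$ corresponds to an $n$-sheeted (unramified) covering $Y \to X$; the total space $Y$ is again a closed oriented surface. Multiplicativity of the Euler characteristic gives $\chi(Y) = n\,\chi(X)$, i.e.,
\[
2 - 2g' \;=\; n(2 - 2g),
\]
so $g' = n(g-1)+1$, and $\Gamma' = \pi_1(Y)$ is a discrete surface group of that genus.

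Finally, I would verify that $U = \overline{\Gamma'}$ is genuinely the profinite completion $\widehat{\Gamma'}$ of $\Gamma'$, not merely a quotient. The natural map $\widehat{\Gamma'} \to \overline{\Gamma'}$ is surjective, and to see it is injective it suffices to show that the profinite topology on $\Gamma$ restricted to $\Gamma'$ coincides with the profinite topology on $\Gamma'$. This rests on a routine normal-core argument: any finite-index subgroup $H \leq \Gamma'$ has finite index in $\Gamma$, so its normal core $\bigcap_{\gamma\in\Gamma}\gamma H\gamma^{-1}$ is a finite-index normal subgroup of $\Gamma$ contained in $H$. Hence $U \cong \widehat{\Gamma'}$ is a profinite surface group of genus $n(g-1)+1$.

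The only step requiring any care is the last one — the identification of $\overline{\Gamma'}$ with $\widehat{\Gamma'}$ — since in general, the closure of a subgroup in a profinite completion need not be its own profinite completion; however, the finite-index hypothesis makes the normal-core observation immediate, so this obstacle dissolves. The geometric/Euler-characteristic ingredient is standard and carries no difficulty.
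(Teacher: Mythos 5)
Your proof is correct and follows exactly the route the paper intends: the paper simply states that the fact ``is well known for surface groups, hence follows for profinite surface groups by completion,'' and your argument supplies precisely those details (the open-subgroup/finite-index-subgroup correspondence, the Euler characteristic computation, and the normal-core verification that $\overline{\Gamma'} \cong \widehat{\Gamma'}$). Nothing further is needed.
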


This is well know for surface groups, hence follows for profinite surface groups by completion.

Let $\Pi$ be a profinite group. 
A finite split embedding problem (\textbf{FSEP}) for $\Pi$ consists of finite groups $A,G$, an action of $G$ on $A$, and epimorphisms $\mu\colon \Pi\to G$ and $\alpha\colon A\rtimes G \to G$. We denote it by $(\mu,\alpha)$. A \textbf{weak solution} is a homomorphism $\psi\colon \Pi\to A\rtimes G$ such that $\alpha\circ\psi = \mu$.  If $\psi$ is surjective we say it is a \textbf{proper solution}.

We shall need the following technical lemma. 
\begin{lemma}
\label{lem:large-g-sol}
Let 
$(f\colon \Pi \to B, \alpha\colon A\to B)$ be a  finite split embedding problem for $\Pi$ of genus $g\geq 2 |A|^3$. Then $(f,\alpha)$ is properly solvable. 
\end{lemma}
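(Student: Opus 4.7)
The plan is to modify the canonical weak solution $\psi_0 = s\circ f\colon \Pi \to A$ (with $s\colon B\to A$ a fixed splitting of $\alpha$) into a proper solution by a counting argument, using the large genus as slack. Let $K=\ker\alpha$. Any weak solution of $(f,\alpha)$ corresponds to a tuple $(a_i,b_i)_{i=1}^g \in A^{2g}$ with $\alpha(a_i)=f(x_i)$, $\alpha(b_i)=f(y_i)$, and $\prod_{i=1}^g[a_i,b_i]=1$. Writing $a_i=s(f(x_i))k_i$ and $b_i=s(f(y_i))\ell_i$ with $k_i,\ell_i\in K$, the weak solutions are parametrized by tuples $(k_i,\ell_i)\in K^{2g}$ subject to a single $K$-valued equation obtained by expanding $\prod[a_i,b_i]$ in $A$.

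The first step is to show that the set $W$ of all weak solutions satisfies $|W|\geq |K|^{2g-1}$. I would argue that, holding all but one coordinate fixed (say $k_1$), the expanded relation becomes an affine-type equation in $k_1$ that is surjective onto $K$, hence has exactly $|K|$ solutions per setting of the remaining variables. In the nonabelian setting this requires checking that a suitable commutator varies surjectively over $K$ as $k_1$ varies; this fails only in degenerate cases (e.g.\ when the conjugation action renders the chosen coordinate inert), but with $2g\geq 4|A|^3$ coordinates available one is free to select a nondegenerate coordinate to vary. This is the main technical step.

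The second step is an upper bound on the non-proper solutions. A solution $\psi$ fails to be proper precisely when its image lies in some maximal proper subgroup $M<A$ containing $s(B)$; equivalently, every $k_i,\ell_i$ lies in $K\cap M$, a proper subgroup of $K$ of index at least $2$. The number of such restricted tuples is at most $|K\cap M|^{2g}\leq (|K|/2)^{2g}$. Summing over the at most $2^{|A|}$ maximal subgroups of $A$, the total count of non-proper solutions is at most $2^{|A|}(|K|/2)^{2g}$. Comparing with the lower bound from Step~1, the inequality $|K|^{2g-1}>2^{|A|}(|K|/2)^{2g}$ simplifies to $2^{2g-1}>2^{|A|}|K|$, which is comfortably implied by $g\geq 2|A|^3$. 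Hence a proper solution exists.

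The main obstacle is Step~1, the lower bound on $|W|$: the cocycle-style expansion of $\prod[a_i,b_i]$ in $K$ is intricate in the nonabelian case, and some care is needed to guarantee the existence of a single coordinate whose variation produces a surjection onto $K$. The hypothesis $g\geq 2|A|^3$ provides a large supply of coordinates, so even ruling out many ``bad'' ones leaves plenty for the argument to go through; this is also the source of the apparently loose exponent $3$ in the bound.
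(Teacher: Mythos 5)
Your strategy is genuinely different from the paper's (which applies the pigeonhole principle to the images $(\phi(x_i),\phi(y_i))$ of the canonical weak solution $\phi=\beta\circ f$, uses Nielsen transformations preserving the surface relation to move $r=2|A|^2/|B|$ pairs of generators with equal images to the front, and then quotes \cite[Lemma~6.1]{PachecoStevensonZalesskii2009}), but your counting approach has a genuine gap exactly where you flag it: Step~1. The single-coordinate surjectivity claim is not merely delicate in ``degenerate cases'' --- it is false for \emph{every} coordinate in quite ordinary examples. Writing $a_1=s_1k_1$ and computing $[a_1d,b_1]=d^{-1}[a_1,b_1]\,d^{b_1}$ for $d\in K$, one finds that the fibers of $k_1\mapsto[s_1k_1,b_1]$ are cosets of $C_K(b_1^{a_1})$, so the image has size $|K|/|C_K(b_1^{a_1})|$ and the map can only be surjective onto a set of size $|K|$ if this centralizer is trivial. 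Whenever $Z(A)\cap K\neq 1$ (for instance $B=1$, $A=K=Q_8$, where every commutator lies in $\{\pm1\}$), no choice of coordinate works, no matter how large $g$ is; having $4|A|^3$ coordinates available does not help because the obstruction is uniform over all of them. The lower bound $|W|\geq|K|^{2g-1}$ may nevertheless be true --- in the absolute case $B=1$ it follows from Mednykh's formula $|\mathrm{Hom}(\pi_1(\Sigma_g),A)|=|A|^{2g-1}\sum_{\chi}\chi(1)^{2-2g}$ --- but establishing it in the relative setting requires a Frobenius-type character-theoretic count of lifts, not a coordinate-by-coordinate argument. (Also, even where your surjectivity did hold, it would give exactly one solution for $k_1$ per choice of the other coordinates, not $|K|$; the conclusion $|W|\geq|K|^{2g-1}$ is what you want, but the sentence as written miscounts.)

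Steps~2 and~3 are essentially sound, with one correction: if $\psi(\Pi)\leq M$ for a maximal subgroup $M<A$, then $M$ need not contain $s(B)$, and the condition on $k_i$ is that $s(f(x_i))k_i\in M$, which confines $k_i$ to a \emph{coset} of $K\cap M$ rather than to $K\cap M$ itself. Since $\alpha(M)\supseteq\alpha(\psi(\Pi))=B$ forces $MK=A$ and hence $[K:K\cap M]=[A:M]\geq 2$, the bound $(|K|/2)^{2g}$ per maximal subgroup survives, and the final comparison $2g>|A|+\log_2|K|$ indeed follows comfortably from $g\geq 2|A|^3$. So the architecture of the counting argument is viable, but its load-bearing step is unproved and the method you propose for it does not work; you would need to replace Step~1 by the representation-theoretic counting formula (or else adopt the paper's pigeonhole-and-Nielsen-transformation route).
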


\begin{remark}
The bound $g \geq 2|A|^3$ is not the best possible. In fact, if $s$ is the minimal number of generators of $\ker \alpha$ as a normal subgroup of $A$, then $g \geq s |B|^2 (|A| + 1)$ suffices. We will not use this sharper bound here, and hence will not prove it.   
\end{remark}

\begin{proof}
Let  $n=|A|$, and $\beta\colon B\to A$ a section of $\alpha$. Note that $\ker \alpha$ is generated by $\frac{|A|}{|B|}$ elements. 
Let $\phi = \beta \circ f \colon \Pi \to A$. Then $\phi$ is a weak solution. 

By \cite[Lemma~6.1]{	PachecoStevensonZalesskii2009}, it suffices to replace the generators of $\Pi$ with a different set of generators having the same unique relation such that
the first $\frac{|A|^2 + |A|}{|B|}\leq \frac{2|A|^2}{|B|}$ new $x_i$'s (resp., $y_i$'s) have the same image under $\phi$. Let $r = \frac{2|A|^2}{|B|}$. 

Each  of the $g$ pairs $(x_i,y_i)$ has $|B|^2$ possibilities for $(\phi(x_i), \phi(y_i))$, hence, since $g\geq 2|A|^3 \geq |B|^2 r$, Dirichlet's box principle gives  indexes $j_{1} < \cdots < j_{r}$ for which 
\begin{equation}
\label{eq:equal-image}
\phi(x_{j_1}) = \cdots = \phi(x_{j_r}) \qquad \mbox{and} \qquad \phi(y_{j_1}) = \cdots = \phi(y_{j_r})
\end{equation}
The following argument explains how to replace $j_1$ with $1$, $j_2$ with $2$, and so forth. Let $x^y = y^{-1}xy$. 
Suppose $j_1\neq 1$. Then 
\[
\prod_{i=1}^{g}
[x_i,y_i]=[x_{j_1},y_{j_1}]([x_1,y_1]\cdots
[x_{j_1-1},y_{j_1-1}])^{[x_{j_1},y_{j_1}]}[x_{j_1+1},y_{j_1+1}]\cdots
[x_{g},y_{g}].
\]
For each $i=1,\ldots, j_1-1$, replace the pair of generators $x_i, y_i$ with
$x_i^{[x_{j_1},y_{j_1}]},y_i^{[x_{j_1},y_{j_1}]}$. Thus we may assume that $j_1=1$. Continuing similarly, we get a new presentation of $\Pi$ of the same kind for which \eqref{eq:equal-image} holds, and hence by \cite[Lemma~6.1]{PachecoStevensonZalesskii2009} $(f,\alpha)$ is solvable.
\end{proof}

\section{Diamond $\diamond$}
In this section we prove Theorem~\ref{thm:semifreesubgroup}.
\subsection{Haran-Shapiro Induction}
\label{subsec:HS-Ind}
Let $N\leq \Pi$ be a subgroup of $\Pi$. 
Consider a FSEP
\[
\calE = (\mu_1 \colon N \to G_1, \alpha_1 \colon A\rtimes G_1 \to G_1) 
\]
for $N$. We describe a method to construct an embedding problem $\calE_{ind}$ for $\Pi$ such that a weak solution of $\calE_{ind}$ induces a weak solution of $\mathcal{E}$, and under certain conditions, a proper solution of $\calE_{ind}$ induces a proper solution of $\mathcal{E}$. 

We start by setting up the notation. Let $L\normal \Pi$ be an open normal subgroup of $\Pi$. Assume
\begin{equation}\label{ass:L-ker}
L\cap N \leq \ker \mu_1.
\end{equation} 
Let $\mu \colon \Pi \to G := \Pi/L$ be the natural epimorphism, $G_0 = NL/L \cong N/N\cap L$, and $\mu_0 = \mu|_{N}\colon N\to G_0$. Then $\mu_1$ factors as $\mu_1 = \nu \circ\mu_0$, for some canonically defined $\nu\colon G_0\to G_1$. The group $G_0$ acts on $A$ via $\nu$, i.e., $a^{g} := a^{\nu(g)}$, for all $a\in A, g\in G_0$. Thus all the maps in the following diagram are canonically defined. 

\[
\xymatrix{
	&N\ar[d]_{\mu_0}\ar@/^10pt/[dd]^{\mu_1}\\
A\rtimes G_0 \ar[r]^-{\alpha_0}\ar[d]^{\rho}
	& G_0\ar[d]_\nu \\
A\rtimes G_1\ar[r]^-{\alpha_1}
	&G_1
}
\]

The group $G$ acts on 
\[
\Ind_{G_0}^G(A) = \{ f\colon G\to A \mid f(\sigma\tau) = f(\sigma)^\tau, \ \forall \sigma\in G, \tau\in G_0\} \cong A^{(G:G_0)}
\]
by $(f^\sigma)(\sigma') = f(\sigma\sigma')$, for all $\sigma,\sigma'\in G$, $f\in \Ind_{G_0}^G(A)$. This gives rise to the so called \textbf{twisted wreath product}
\[
A\wr_{G_0} G = \Ind_{G_0}^G(A) \rtimes G.
\]
Let $\alpha\colon A\wr_{G_0}G \to G$ be the projection map. Then we have the following FSEP for $\Pi$ induced from $\calE$ (w.r.t.\ $L$ satisfying \eqref{ass:L-ker}):
\begin{equation}\label{eq:E-ind}
\calE_{ind}(L) = (\mu\colon \Pi \to G, \alpha\colon A\wr_{G_0} G \to G).
\end{equation}

\newcommand{\sh}{{\rm Sh}}
Let $\sh \colon \Ind_{G_0}^G(A) \rtimes G_0 \to A\rtimes G_0$ be defined by $\sh((f,\sigma)) = f(1)\sigma$. Clearly $\sh$ is surjective, it is also a homomorphism, since
\[
\sh(f^\sigma) = f^\sigma(1)=f(\sigma) = f(1)^\sigma = \sh(f)^\sigma.
\]
Now, a weak solution $\psi \colon \Pi \to A\wr_{G_0} G$ of $\calE_{ind}$ induces the weak solution  $\psi^{ind} = \rho \circ \sh \circ \psi|_{N}$ of $\calE$:
\[
\xymatrix@1{
N\ar[r]^-{\psi|_{N}}\ar@/_12pt/[rrr]_{\psi^{ind}}
	&\Ind_{G_0}^G(A)\rtimes G_0\ar[r]^-{\sh}
		&A\rtimes G_0\ar[r]^-\rho
			&A\rtimes G_1		
}
\]
(Note $\psi(N)\leq \Ind_{G_0}^G(A)\rtimes G_0$ since $\mu(N) = \mu_0(N) = G_0$, hence $\sh \circ\psi|_{N}$ is well defined.) 

Assume $\psi$ is surjective. In general this does not imply surjectivity of $\psi^{ind}$. The following result gives a working sufficient condition on $L$ for $\psi^{ind}$ to be surjective.

\begin{proposition}[{\cite[Proposition~4.5]{Bary-SorokerHaranHarbater}}]
\label{prop:main}%
Let $N\leq \Pi$ be profinite groups
and let
\[
\calE =( \mu_1\colon N \to G_1, \alpha_1\colon A\rtimes G_1 \to
G_1)
\]
be a FSEP for $N$. Let $D, \Pi_0, L$ be subgroups of $\Pi$ such that
\begin{labellist}\label{eq mainthm}
\item \label{eq main thma}
$D$ is an open normal subgroup of $\Pi$ with $N \cap D \le \ker\mu_1$,
\item \label{eq main thmb}
$\Pi_0$ is an open subgroup of $\Pi$ with $N\leq \Pi_0\leq ND$,
\item \label{eq main thmc}
$L$ is an open normal subgroup of $\Pi$ with $L \leq \Pi_0 \cap D$.
\end{labellist}
In particular $L\cap N\leq D\cap N\leq \ker\mu_1$, so \eqref{ass:L-ker} holds. 

Assume that there is a
closed normal subgroup $\calN$ of $\Pi$ with $\calN \leq N \cap L$ such that
there is NO nontrivial quotient $\Agag$ of $A$ through which the
action of $G_0$ on $A$ descends and for which the FSEP
\begin{equation}\label{eq-twisted-wreath-product-EP}
\bar\calE_{ind,\calN}(L) =  (\mugag\colon \Pi/\calN\to G, \alphagag\colon\Agag\wr_{G_0}G \to G),
\end{equation}
where $\mugag$ is the quotient map, $G=\Pi/L$, and $G_0 = \Pi_0/L$, is properly solvable.
Then a proper solution $\psi$ of $\calE_{ind}$ induces  a proper solution $\psi^{ind}$ of $\calE$.
\end{proposition}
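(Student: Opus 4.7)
My plan is to argue by contradiction: assume $\psi^{ind}$ is not surjective, and construct from $\psi$ a nontrivial $G_0$-quotient $\bar A$ of $A$ for which $\bar\calE_{ind,\calN}(L)$ is properly solvable, contradicting the hypothesis.

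First reduce the problem. Since $\alpha_1 \circ \psi^{ind} = \mu_1$ is surjective, $\psi^{ind}(N)$ projects onto $G_1$, so proper solvability is equivalent to $A_0 := \psi^{ind}(\ker\mu_1) = A$ (note $\psi^{ind}(\ker\mu_1) \leq \ker\alpha_1 = A$). Assume for contradiction that $A_0 \lneq A$. The candidate quotient will be $\bar A := A/A_0^*$, where $A_0^* := \sh(\psi(\calN))$; this image lies in $A$ because $\calN \leq L \cap N \leq \ker\mu_0$ by (a), so the $G_0$-component of every $\sh(\psi(n))$ for $n\in\calN$ is trivial.

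The proof hinges on three properties of $A_0^*$. (i) $A_0^* \subseteq A_0$, hence $A_0^* \lneq A$: since $\calN \leq \ker\mu_1$ by (a), and $\rho$ is the identity on elements of $A$, we get $A_0^* = \rho(\sh(\psi(\calN))) = \psi^{ind}(\calN) \leq \psi^{ind}(\ker\mu_1) = A_0 \lneq A$. (ii) $A_0^*$ is $G_0$-invariant: normality of $\calN$ in $\Pi$ together with surjectivity of $\psi$ makes $\psi(\calN)$ normal in $A\wr_{G_0}G$, so for each $f \in \psi(\calN)$ and $\sigma \in G$ also $f^\sigma \in \psi(\calN)$; thus $f(\sigma) = f^\sigma(1) \in A_0^*$, and taking $\sigma = \tau \in G_0$ gives $f(1)^\tau = f(\tau) \in A_0^*$. (iii) $A_0^*$ is normal in $A$: for $a \in A$, construct a lift $\tilde a \in \Ind_{G_0}^G(A)$ with $\sh(\tilde a) = a$ by setting $\tilde a(\tau) = a^\tau$ for $\tau\in G_0$ and $\tilde a \equiv 1$ on all other $G_0$-cosets (extended $G_0$-equivariantly); then $\tilde a \in A\wr_{G_0}G$ normalizes $\psi(\calN)$, and applying the homomorphism $\sh$ yields $a^{-1} A_0^* a = A_0^*$.

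With these in hand, $\bar A = A/A_0^*$ is a nontrivial quotient of $A$ with inherited $G_0$-action, and the canonical surjection $q\colon A\wr_{G_0}G \twoheadrightarrow \bar A\wr_{G_0}G$ has kernel $\Ind_{G_0}^G(A_0^*)$, which by (ii) contains $\psi(\calN)$. Therefore $q \circ \psi$ factors through $\Pi/\calN$ to give a surjection $\bar\psi \colon \Pi/\calN \to \bar A\wr_{G_0}G$ with $\bar\alpha \circ \bar\psi = \bar\mu$, i.e., a proper solution of $\bar\calE_{ind,\calN}(L)$. This contradicts the hypothesis, so $A_0 = A$ and $\psi^{ind}$ is surjective. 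I expect the main obstacle to be step (iii): transferring the normality of $\psi(\calN)$ in the twisted wreath product back to normality of $A_0^*$ in $A$. The key trick is that although $\sh$ is not injective, any $a\in A$ admits a lift into $\Ind_{G_0}^G(A)$, and that lift automatically normalizes $\psi(\calN)$, which then descends through $\sh$ to give the required conjugation identity.
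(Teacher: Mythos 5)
Your argument is correct and is essentially the standard Haran--Shapiro argument of the cited source; the paper itself does not reprove this proposition but quotes it from Bary-Soroker--Haran--Harbater, Proposition~4.5. In particular your key points --- that $\psi(\calN)$ is normal in the twisted wreath product, that its image under the Shapiro map is therefore a normal, $G_0$-invariant subgroup of $A$, and that this subgroup must equal $A$ lest the quotient of $A$ by it yield a proper solution of the forbidden induced embedding problem --- are exactly the steps of that proof.
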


\subsection{Condition ($\diamond$)}

The following result will be used in the sequel.

\begin{lemma}
\label{cor:bhh}%
\label{lem:diam-prec}%
Let $N\leq \Pi$ be profinite groups with $[\Pi:N]=\infty$ and assume there exist normal subgroups $N_1,N_2$ of $\Pi$ such that $N_1\cap N_2\leq N$, $[N_1:N_1\cap N]\geq 3$, and $[N_2:N_2\cap N]\geq 2$. Let 
\[
\calE =( \mu_1\colon N \to G_1, \alpha_1\colon A\rtimes G_1 \to
G_1)
\]
be a FSEP for $N$.  Let $L$ be an open normal subgroup of $\Pi$ satisfying 
\begin{enumerate}\renewcommand{\theenumi}{\roman{enumi}}
\item $L\cap N\leq \ker \mu_1$,
\item $[N_1 N L :NL] \geq 3$,
\item $[N_2 N L :NL] \geq 2$, and
\item $[\Pi : NL] \geq 3$.
\end{enumerate}
Let $G=\Pi/L$, $G_0 = NL/L \cong N/N\cap L$ and let
\[
\calE_{ind} = (\mu\colon \Pi \to G, \alpha\colon A\wr_{G_0} G \to G)
\]
be as defined the induced embedding problem of Equation~\eqref{eq:E-ind}. Then a proper solution $\psi$ of $\calE_{ind}$ induces a proper solution $\psi^{ind}$ of $\calE$. 
\end{lemma}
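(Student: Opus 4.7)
My plan is to apply Proposition~\ref{prop:main} with the choices $D = L$, $\Pi_0 = NL$, and $\mathcal{N} = N_1 \cap N_2 \cap L$. The subgroup $\mathcal{N}$ is closed and normal in $\Pi$ (as an intersection of closed normal subgroups) and satisfies $\mathcal{N} \leq N \cap L$ since $N_1 \cap N_2 \leq N$. With these choices, condition~\eqref{eq main thma} reduces to $N \cap L \leq \ker \mu_1$, which is our hypothesis (i); condition~\eqref{eq main thmb} reduces to $N \leq NL = ND$; and condition~\eqref{eq main thmc} reduces to $L \leq NL \cap L = \Pi_0 \cap D$.

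The heart of the proof is to verify the non-solvability hypothesis of Proposition~\ref{prop:main}: no nontrivial quotient $\bar A$ of $A$ (through which the $G_0$-action descends) admits a proper solution of $\bar{\calE}_{ind,\mathcal{N}}(L)$. I argue by contradiction. Suppose such $\bar A \neq 1$ and a proper solution $\bar\psi: \Pi/\mathcal{N} \to \bar A \wr_{G_0} G$ exist. Composing with the quotient map $\Pi \to \Pi/\mathcal{N}$ yields a surjection $\psi: \Pi \to \bar A \wr_{G_0} G$ with $\alpha \circ \psi = \mu$ and $\psi(\mathcal{N}) = 1$; surjectivity of $\psi$ combined with $\alpha\psi = \mu$ forces $\psi(L) = \ker\alpha = \Ind_{G_0}^G(\bar A)$.

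The main obstacle is to show that no such $\psi$ can exist. I translate conditions (ii)--(iv) into the finite quotient $G = \Pi/L$: setting $H_i = N_i L/L$ (normal in $G$) and $G_0 = NL/L$, they become $[H_1 G_0 : G_0] \geq 3$, $[H_2 G_0 : G_0] \geq 2$, and $[G:G_0] \geq 3$. The subgroups $J_i = \psi(N_i \cap L)$ are $G$-invariant subgroups of the product $\bar A^{G/G_0}$, and since $\psi(\mathcal{N}) = 1$, in $\Pi/\mathcal{N}$ the subgroups $\bar N_1 \cap \bar L$ and $\bar N_2 \cap \bar L$ intersect trivially. The contradiction should follow from a coset and orbit counting argument in the spirit of the classical diamond theorem: a nontrivial $\bar A$ together with these index inequalities imposes a structure on $\psi(N_1)$ and $\psi(N_2)$ as normal subgroups of $\bar A \wr_{G_0} G$ that is incompatible with $\psi$ being surjective, forcing $\bar A = 1$.
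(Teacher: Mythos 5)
Your reduction to Proposition~\ref{prop:main} is correct and is the same as the paper's: taking $D=L$, $\Pi_0=NL$ and $\calN=N_1\cap N_2\cap L$ satisfies the three hypotheses (a)--(c) of that proposition, and $\calN\leq N\cap L$ because $N_1\cap N_2\leq N$. The problem is that you stop exactly where the real work begins. The claim that no surjection $\psi\colon\Pi\to\Agag\wr_{G_0}G$ with $\alpha\circ\psi=\mu$ and $\psi(\calN)=1$ can exist for $\Agag\neq 1$ is asserted to ``follow from a coset and orbit counting argument,'' but no such argument is given, and the observations you do record are not enough. One of them is in fact unjustified: from $(N_1\cap L)\cap(N_2\cap L)=\calN$ and $\psi(\calN)=1$ you cannot conclude that $\psi(N_1\cap L)$ and $\psi(N_2\cap L)$ intersect trivially, since the image of an intersection is in general smaller than the intersection of the images; and triviality of that intersection is in any case not the property the proof needs.

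What is actually needed is a commutator argument. From $[\,N_1\cap L,\,N_2\,]\subseteq[L,N_2]\cap[N_1,N_2]\subseteq L\cap N_1\cap N_2=\calN$ and $\psi(\calN)=1$ one deduces that $\psi(N_1)\cap\ker\alpha$ (which equals $\psi(N_1\cap L)$, by lifting and using $\ker\mu=L$) centralizes $\psi(N_2)$. On the other hand, $H_i=\psi(N_i)$ is normal in $\Agag\wr_{G_0}G$ with $\alpha(H_i)=N_iL/L$; condition (iii) produces $h_2\in H_2$ with $\alpha(h_2)\notin G_0$, and conditions (ii) and (iv), i.e.\ $(\alpha(H_1)G_0:G_0)\geq 3$ and $(G:G_0)\geq 3$, are precisely the hypotheses of the structural lemma on normal subgroups of twisted wreath products \cite[Lemma~13.7.4(a)]{FriedJarden2005}, which then yields $h_1\in H_1$ with $\alpha(h_1)=1$ and $[h_1,h_2]\neq 1$ whenever $\Agag\neq 1$. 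This contradicts the centralizing property and forces $\Agag=1$. Without this step (or an independent proof of that wreath-product lemma), your argument does not rule out a proper solution of $\bar\calE_{ind,\calN}(L)$, so the proposal has a genuine gap at its central point.
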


\begin{proof}
To prove the assertion we use Proposition~\ref{prop:main}.  Let $D$ be an open normal subgroup of $\Pi$ with $N\cap D\leq \ker \mu_1$, let $\Pi_0 = ND$. 
Let $L_0$ be an open normal subgroup of $\Pi$ such that for every open normal subgroup $L$ of $\Pi$ contained in $L_0$ we have 
\renewcommand{\theequation}{\arabic{equation}'}
\begin{eqnarray}
&&N_1 L, N_2 L \not\leq NL \ \mbox{(use $N_1,N_2\not\leq N$)}.\\
&&[\Pi : NL] > 2 \ \mbox{(use $[\Pi:N]> 2$)}.\\
&&(N_1 N L : NL) >  2 \ \mbox{(use $[N_1N:N]>2$)}.
\end{eqnarray}

Choose such an $L$, and let $G = \Pi/L$, $G_0 =   N/N\cap L\cong NL/L$, and $G_i = N_i/N_i\cap L \cong N_i L/L$. Then taking the  above conditions modulo $L$ gives the following conditions. 
\renewcommand{\theequation}{\arabic{equation}}\addtocounter{equation}{-3}
\begin{eqnarray}
\label{eq:not-contained}
&&G_1, G_2 \not\leq G_0.\\
&& (G:G_0) >2.
\label{eq:G-G_0>2}\\
&& (G_1G_0 :G_0)>2.
\label{eq:ind>2}
\end{eqnarray}
Let $\calN = N_1 \cap N_2 \cap L$. 

Let $\Agag$ be a non-trivial quotient of $A$ through which the action of $G_0$ descends. By Proposition~\ref{prop:main} it suffices to show that $\bar\calE_{ind,\calN}$ appearing in \eqref{eq-twisted-wreath-product-EP} is not properly solvable. 

Assume $\psi\colon \Pi \to \Agag \wr_{G_0} G$ is an epimorphism with $\alpha\circ\psi = \mu$ that factors through $F/\calN$. Then $\psi(\calN) = 1$. For $i=1,2$ put $H_i = \psi(N_i)$. Then $H_i\normal \Agag \wr_{G_0} G$ and $\alpha(H_i) = \mu (N_i) = G_i$. 
By \eqref{eq:not-contained} there is an $h_2 
\in H_2$ with $\alpha(h_2 ) \not\in 
G_0$. Recalling \eqref{eq:ind>2}, \cite[Lemma 13.7.4(a)]{FriedJarden2005} gives an $h_1\in H_1$ for which $\alpha(h_1 ) = 1$  and $[h_1 , h_2 ] \neq 1$. 

For $i=1,2$, lift $h_i$ to $y_i \in
N_i$ (i.e., $\psi(y_i ) = h_i$). Then $\mu(y_1 ) = 
\alpha(h_1 ) = 1$. So, $y_1 \in L$. Then $[y_1 , y_2 ] \in [L, N_2 ] \cap [N_1 , N_2 ] \leq L\cap  (N_1 \cap N_2 ) = 
\calN$ . So, $[h_1 , h_2 ] = [\psi(y_1 ), \psi(y_2 )] 
\in \psi(\calN ) = 1$. This contradiction proves that 
$\psi$ as above does not exist.
\end{proof}

We write $f\uparrow \infty$ for an increasing function $f\colon \bbR^+\to \bbR^+$ with $\displaystyle\lim_{x\to \infty} f(x) = \infty$.

We say that a subgroup $N$ of $\Pi$ with $[\Pi:N]=\infty$ satisfies \textbf{Condition~($\boldsymbol{\diamond}$)} in $\Pi$ if there exist normal subgroups $N_1,N_2$ of $\Pi$ such that $N_1\cap N_2 \leq N$, $[N_1:N_1\cap N]\geq 3$, $[N_2:N_2\cap N]\geq 2$, and for every $f\uparrow \infty$, $r\in \bbN$, and open subgroup $N'$ of $N$ there exists
a diagram of subgroups 
\[
\xymatrix{
N'\ar@{-}[r]
	&N\ar@{-}[r]
		&E_0\ar@{-}[r]
			&E\ar@{-}[r]
				&\Pi\\
N\cap L\ar@{-}[rr]\ar@{-}[u]
		&&L\ar@{-}[u]
}
\]
such that
\begin{enumerate}
\item $L\leq E_0\leq E$ are open in $\Pi$;
\item $L$ is normal in $E$;
\item $[N_1\cap E : N_1 \cap E_0]\geq 3$;
\item $[N_2\cap E:N_2  \cap E_0]\geq 2$;
\item $f([\Pi:E]) \geq r \cdot [E:L]$.
\end{enumerate}

In the sequel we use the notion of sparse and abundant subgroups (\cite[Defintion~2.1]{Bary-Soroker2006}) and some of their basic properties. 

\begin{definition}
A closed subgroup $M$ of a profinite group $\Pi$ of infinite index is
called \textbf{sparse} if for every $n\in\bbN$ there exists an open
subgroup $K$ of $\Pi$ containing $M$ such that for every proper open
subgroup $L$ of $K$ containing $M$ we have $[K:L]\geq n$.

It follows that one can take $K$ with arbitrarily large index in $\Pi$. See \cite[Definition~5.1]{Bary-SorokerHaranHarbater}.

A subgroup of $\Pi$ is called \textbf{abundant} if it is not open and not sparse
\end{definition}

\begin{proposition}
\label{prop:abundant-cond-s}
Let $\Pi,N,N_1,N_2$ be profinite groups such that $N,N_1,N_2$ are subgroups of $\Pi$, $N_1,N_2$ are normal in $\Pi$, $[\Pi:N]=\infty$, $N_1\cap N_2 \leq N$, $[N_1:N_1\cap N]\geq 3$, and $[N_2:N_2\cap N]\geq 2$. 
Each of the following implies that $N$ satisfies Condition ($\diamond$) in an open subgroup of $\Pi$.
\begin{labellist}
\item \label{composition_lemma_a}
$[\Pi:NN_1N_2] = \infty$.
\item \label{composition_lemma_b}
$[\Pi:NN_1N_2]<\infty$ and $NN_1$ is abundant in $\Pi$.
\item \label{composition_lemma_c}
$[\Pi:NN_1N_2]<\infty$ and $NN_2$ is abundant in $\Pi$.
\item \label{composition_lemma_d}
$[\Pi:(NN_1)\cap (NN_2)]<\infty$ and $N$ is abundant in $\Pi$.
\end{labellist}
\end{proposition}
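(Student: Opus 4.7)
The plan is to verify Condition~($\diamond$) in each of the four cases separately, by exhibiting, for any given $f\uparrow\infty$, $r\in\bbN$, and open $N'\leq N$, a diagram $L\leq E_0\leq E\leq \Pi'$ with the required properties, where $\Pi'\leq\Pi$ is a suitable open subgroup containing $N$. The common preliminary step is to fix an open normal subgroup $M\triangleleft\Pi$ with $N\cap M\leq N'$ (taking the normal core in $\Pi$ of an open subgroup witnessing openness of $N'$ in $N$), and an open subgroup $U\leq\Pi$ containing $N$ with $M\leq U$, $[N_1:N_1\cap U]\geq 3$ and $[N_2:N_2\cap U]\geq 2$; the latter exists by continuity together with the hypotheses $[N_1:N_1\cap N]\geq 3$ and $[N_2:N_2\cap N]\geq 2$. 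In every case I will set $L:=E\cap M$ and $E_0:=E\cap U$, so that $L\triangleleft E$ (since $M\triangleleft\Pi$), $N\cap L\leq N'$, and $[E:L]\leq[\Pi:M]$ independently of~$E$; condition~(v) therefore reduces to making $[\Pi:E]$ large enough that $f([\Pi:E])\geq r\cdot[\Pi:M]$. For case~(a), the hypothesis $[\Pi:NN_1N_2]=\infty$ lets me take $E$ open with $NN_1N_2\leq E$ and $[\Pi:E]$ arbitrarily large, and then $N_i\leq E$ forces conditions~(iii), (iv) to reduce to the bounds $[N_i:N_i\cap U]$ built into~$U$.

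Cases (b) and (c) are symmetric, so I handle (b). Abundance of $NN_1$ provides a constant $n$ such that any open supergroup $K$ of $NN_1$ admits an open proper subgroup $K'\supseteq NN_1$ with $[K:K']<n$. Iterating inside the open group $NN_1N_2$ yields an infinite descending chain of open subgroups containing $NN_1$ with step index $<n$---infinite because $NN_1$, being closed and not open, has infinite index in $NN_1N_2$. I take $E$ to be a deep term of this chain: condition~(v) follows from the bounded-step descent and condition~(iii) is automatic since $N_1\leq E$. The main obstacle is condition~(iv), which asks $N_2\cap E\not\leq U$. The identity $[N_2:N_2\cap E]=[\Pi:E]/[\Pi:N_2E]$, combined with $[\Pi:N_2E]\leq[\Pi:NN_1N_2]$, shows that $N_2\cap E$ has index in $N_2$ growing with $[\Pi:E]$, but to secure~(iv) I plan to engineer the refinement so that a chosen element of $N_2\setminus U$ stays inside each rung; this is enabled by $N_2\not\leq U$, which follows from $[N_2:N_2\cap U]\geq 2$.

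For case~(d), both $NN_1$ and $NN_2$ are open (since their intersection is) and the abundance hypothesis is on $N$ itself. It provides, inside a suitable open $\Pi'\supseteq N$, an infinite descending chain $\Pi'\supsetneq W_1\supsetneq W_2\supsetneq\cdots\supseteq N$ of open subgroups with step index bounded by the constant for $N$. Taking $E=W_i$ for $i$ large makes~(v) easy, but the central obstacle---the technical heart of the proposition---is ensuring conditions~(iii) and~(iv) simultaneously, since the rungs could collapse $N_1\cap W_i$ or $N_2\cap W_i$ without the required drops. I plan to overcome this by applying abundance of $N$ inside the open supergroups $NN_1$ and $NN_2$ separately, producing chains that explicitly lose $\geq 3$ cosets of $N_1$ and $\geq 2$ cosets of $N_2$ between appropriate rungs, then patching these choices together to form the desired $E$ and $E_0$. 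Across cases (b), (c), and (d), the simultaneous control of (iii) and (iv) via these nested abundance refinements is the main technical obstacle.
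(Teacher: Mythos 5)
Your case \eqref{composition_lemma_a} is fine and agrees with the paper, but in cases \eqref{composition_lemma_b}--\eqref{composition_lemma_d} there is a genuine gap in how you secure conditions (iii) and (iv) of Condition~($\diamond$). You fix $E_0=E\cap U$ for a subgroup $U$ chosen once at the outset with $[N_i:N_i\cap U]\geq 3,2$, and you propose to guarantee, say, $N_2\cap E\not\leq U$ by ``engineering the refinement so that a chosen element of $N_2\setminus U$ stays inside each rung.'' There is no mechanism for this: the negation of sparseness only asserts that \emph{some} proper open subgroup of bounded index containing $NN_1$ (resp.\ $N$) exists inside each rung; you cannot require it to contain a prescribed element outside $NN_1$. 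Worse, the natural outcome can be the opposite of what you need: if the descending chain converges to a closed subgroup $C$ with $N_2\cap C\leq U$ (e.g.\ $C=NN_1$ with $N_2\cap NN_1\leq N$), then by compactness $N_2\cap E\leq U$ for every sufficiently deep rung $E$, and your condition (iv) fails outright. Indeed, as you yourself observe, $[N_2:N_2\cap E]$ \emph{grows} with $[\Pi:E]$, so $N_2\cap E$ is shrinking --- which works against you. In case \eqref{composition_lemma_d} the ``patching'' of two separately produced chains is likewise unsupported: nothing ensures the two refinements are compatible inside a single pair $E_0\leq E$.

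The missing idea, which is how the paper proceeds, is to use \emph{two consecutive rungs} $E_1<E$ of the chain rather than the fixed subgroup $U$, and to exploit normality of $N_i$ via the identity $N_iE_1=\Pi$ (which holds because $E_1\supseteq NN_1$ and $\Pi=NN_1N_2$ after shrinking $\Pi$, resp.\ $E_1\supseteq N$ and $NN_i=\Pi'$ in case \eqref{composition_lemma_d}). Dedekind's law then gives $(N_i\cap E)E_1=E$, hence
\[
[N_i\cap E:N_i\cap E_1]=[E:E_1],
\]
so taking $E_0\leq E_1$ (the paper takes $E_0=H\cap E_1$ with $H$ playing the role of your $U$) transfers the full index $[E:E_1]$ to $N_i$ with no need to track individual elements. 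This also reveals a second point you have not addressed: for condition (iii) you need $[E:E_1]\geq 3$, not just a proper step, and abundance only promises steps of index $\geq 2$; the paper's Lemma~\ref{lem:abund-sbgrp} handles this by concatenating two index-$2$ steps when necessary. The whole bookkeeping is packaged in the paper's reduction (Lemma~\ref{lem:prep}): it suffices to find $N\leq E_1\leq E$ with $f([\Pi:E])\geq s\cdot[E:E_1]!$ such that for each $i$ either $N_i\leq E$ or ($N_iE_1=\Pi$ and $[E:E_1]\geq 3$). Your proposal would become correct if you replaced your $E_0=E\cap U$ by $E_0=E\cap U\cap E_1$ and argued via this dichotomy instead of trying to retain elements of $N_i$ in deep rungs.
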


We need two lemmas for the proof. 

\begin{lemma}\label{lem:prep}
Let $\Pi$ be a profinite group and $N$ a subgroup of $\Pi$ of infinite index. Let $N_1,N_2$ be normal subgroups of $\Pi$ such that $N_1\cap N_2 \leq N$, $[N_1:N_1\cap N]\geq 3$ and $[N_2:N_2\cap N]\geq 2$. Assume that for every  $f\uparrow \infty$, $s\in\bbN$, $\Pi$ has open subgroups $E_1
\leq E$ containing $N$ such that $f([\Pi:E])\geq s \cdot [E:E_1]!$ and
for each $i\in\{1,2\}$ either
\begin{labellist}
\item $N_i\leq E$ or \label{Preperation_Lemma_a}
\item $N_iE_1 = \Pi$ and $[E:E_1]\geq 3$.\label{Preperation_Lemma_b}
\end{labellist}
Then $N$ satisfies Condition ($\diamond$). 
\end{lemma}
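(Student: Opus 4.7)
The plan is to use the given hypothesis to produce $E_1 \le E$ with the indicated index bound, then carve $E_0$ and $L$ out of them with the aid of an auxiliary open normal subgroup $V \normal \Pi$ that plays two roles: it controls $L \cap N$ (so that $L \cap N \le N'$), and it separates certain cosets of $N_i \cap N$ inside $N_i$ to handle case~\ref{Preperation_Lemma_a}. Using $[N_1:N_1\cap N] \ge 3$, choose $a_1,a_2 \in N_1 \setminus N$ with $a_1^{-1}a_2 \notin N$, and using $[N_2:N_2\cap N] \ge 2$, choose $b \in N_2 \setminus N$. Since $N$ is closed, $a_1,a_2,a_1^{-1}a_2,b \notin N$, and $N'$ is open in $N$, a standard profinite argument produces an open normal subgroup $V$ of $\Pi$ with $V \cap N \le N'$ and
\[
a_1,\ a_2,\ a_1^{-1}a_2,\ b\ \notin\ NV.
\]

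Next, apply the hypothesis with the given $f$ and with $s:=r[\Pi:V]$ to obtain open subgroups $E_1 \le E$ of $\Pi$ containing $N$ with $f([\Pi:E]) \ge s \cdot [E:E_1]!$, each $N_i$ satisfying \ref{Preperation_Lemma_a} or \ref{Preperation_Lemma_b}. Define
\[
E_0 := NV \cap E_1, \qquad L_0 := \bigcap_{g \in E} g E_1 g^{-1}, \qquad L := L_0 \cap V \cap E.
\]
Since $V \normal \Pi$, the set $NV$ is an open subgroup of $\Pi$, so $E_0$ is open in $E$ and $N \le E_0 \le E$. Both $L_0$ and $V \cap E$ are normal in $E$, so $L \normal E$; clearly $L \le L_0 \cap V \le E_1 \cap NV = E_0$ and $L \cap N \le V \cap N \le N'$.

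It remains to verify the numerical conditions of Condition~($\diamond$). For condition~(5),
\[
[E:L] \le [E:L_0] \cdot [E:V \cap E] \le [E:E_1]! \cdot [\Pi:V],
\]
so the choice $s = r[\Pi:V]$ gives $f([\Pi:E]) \ge r[\Pi:V] \cdot [E:E_1]! \ge r \cdot [E:L]$. For conditions~(3)–(4): in case~\ref{Preperation_Lemma_b}, $N_iE = N_iE_1 = \Pi$ implies $[N_i \cap E:N_i \cap E_1] = [\Pi:E_1]/[\Pi:E] = [E:E_1] \ge 3$, which passes to $E_0 \le E_1$; in case~\ref{Preperation_Lemma_a}, $N_i \cap E = N_i$, and the inclusion $a_1,a_2,a_1^{-1}a_2,b \notin NV \supseteq E_0$ shows that $1,a_1,a_2$ (for $i=1$) and $1,b$ (for $i=2$) represent the required number of distinct cosets of $N_i \cap E_0$ in $N_i$. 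The one delicate point, and the main obstacle to a naive approach, is the bookkeeping on $[E:L]$: if one simply took $L = \mathrm{Core}_E(E_0)$ one would pay a factor of $[E:E_0]!$, where $[E:E_0]$ can be as large as $[\Pi:V] \cdot [E:E_1]$, producing a factorial blow-up that no fixed $s$ in the hypothesis can absorb. The trick is to take the core of $E_1$ (not of $E_0$) and only then intersect with $V$; this replaces the factorial blow-up by the single multiplicative constant $[\Pi:V]$, which can be folded into $s$.
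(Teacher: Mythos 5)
Your proof is correct and follows essentially the same route as the paper's: both arguments take the normal core of $E_1$ in $E$ (whose index $\le [E:E_1]!$ is exactly what the factorial in the hypothesis is there to absorb), intersect it with a fixed auxiliary normal subgroup whose index is folded into $s$, and cut $E_0$ out of $E_1$ by an open subgroup containing $N$ that separates the relevant cosets of $N_i\cap N$. The only cosmetic difference is that you use a single open normal $V$ (with $V\cap N\le N'$ and $NV$ doing the coset separation) where the paper uses two auxiliary subgroups $D$ and $H$ and takes the core of $H$ as well.
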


\begin{proof}
Let $f\uparrow \infty$, $r\in \bbN$ and $N'$ an open subgroup of $N$. Then
there exists an open normal subgroup $D$ of $\Pi$ such that $D\cap N\leq N'$. 
Since $[N_1:N_1\cap N]\geq 3$, and $[N_2:N_2\cap N]\geq 2$, $\Pi$ has an open normal subgroup $H$ containing $N$ such that 
\begin{equation}
\label{eq:H}
[N_1:N_1\cap H]\geq 3 \qquad \mbox{and} \qquad [N_2:N_2\cap H]\geq 2.
\end{equation} 
Put $s=r\cdot[\Pi:H]![\Pi:D]$. 

Our condition
gives open subgroups $E_1\leq E$ containing $N$ such that
$f([\Pi:E])\geq s \cdot [E:E_1]!$ and for each $i\in\{1,2\}$ either
\eqref{Preperation_Lemma_a} or \eqref{Preperation_Lemma_b} holds.
Set $E_0 = H\cap E_1$. Let 
$E_{11} = \bigcap_{\sigma\in E} E_1^\sigma$ (resp., $H_{00} = \bigcap_{\sigma\in \Pi}H^\sigma$) be the normal core of $E_1$ (resp., $H$) in  $E$ (resp., $\Pi$). Finally let $L=H_{00} \cap E_{11} \cap D$. 
Then $L\leq H_{00}\cap E_{11}\leq  H \cap E_1 = E_0$. 
$$
\xymatrix{%
	&H_{00}\ar@{-}[r]\ar@{-}[dd] 
		& H\ar@{-}[r]\ar@{-}[d] 
			&\Pi\ar@{-}[d]|{\mbox{$E$}} \\
		&& E_0\ar@{-}[r]\ar@{-}[dl]
			&E_1\ar@{-}[d]\\
L\ar@{-}[r]
	&H_{00}\cap E_{11}\ar@{-}[rr]&&E_{11}
 }
$$
We have
\begin{eqnarray*}
[E:L] & = & [E:H_{00}\cap E_{11} \cap D] \\
& = & [E:E_{11}][E_{11} : H_{00} \cap E_{11}][H_{00} \cap E_{11}:H_{00} \cap E_{11}\cap D]\\
&\leq& [E:E_{11}][\Pi:H_{00}][\Pi:D]\\
& \leq & [E:E_1]![\Pi:H]! [\Pi:D]\leq  \frac{1}{s} f([\Pi:E])[\Pi:H]! [\Pi:D]\\
&=&
\frac1rf([\Pi:E]).
\end{eqnarray*}

It remains to show that $[N_1\cap E:N_1\cap E_0]\geq 3$ and $[N_2\cap E:N_2\cap E_0]\geq 2$. 
First assume that $N_i\leq E$. Then, since $E_0\leq H$,
$$
[N_i\cap E: N_i\cap E_0] \geq [N_i:N_i\cap H],
$$
and we are done by \eqref{eq:H}.

Next assume that $N_i E_1 = \Pi$ and $[E:E_1]\geq 3$. Then $(N_i \cap
E) E_1 = E$, so
$$
[N_i\cap E:N_i\cap E_0]\geq [N_i\cap E:N_i\cap E_1] = [E:E_1],
$$
as needed. 
\end{proof}

\begin{lemma}\label{lem:abund-sbgrp}
Let $N$ be an abundant subgroup of a profinite group $\Pi$. Then for every $f\uparrow \infty$ and $s\in \bbN$ there exist open subgroups $N\leq E_1\leq E \leq \Pi$ such that 
$f([\Pi:E]) \geq s\cdot [E:E_1]!$ and $[E:E_1]\geq 3$. 
\end{lemma}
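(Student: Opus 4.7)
The plan is to exploit the non-sparseness of $N$ to extract a uniform bound $n_0$ on ``how close'' one can approach $N$ by a single step, and then use the fact that $N$ is not open (hence of infinite index in $\Pi$) to choose $E$ with $[\Pi:E]$ arbitrarily large so as to swamp the resulting factorial.

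Since $N$ is abundant, $N$ is not sparse. Negating the definition of sparse yields an integer $n_0 \in \bbN$ such that for every open subgroup $K$ of $\Pi$ containing $N$, there exists a proper open subgroup $L$ of $K$ with $N\leq L\lneq K$ and $[K:L] < n_0$. Since $[K:L]\geq 2$, necessarily $n_0\geq 3$. Moreover, since $N$ is abundant it is not open, so $[\Pi:N]=\infty$, which implies that there are open subgroups of $\Pi$ containing $N$ of arbitrarily large index in $\Pi$.

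Given $f\uparrow\infty$ and $s\in\bbN$, set $C = s\cdot(2n_0)!$. Choose an open subgroup $E$ of $\Pi$ containing $N$ with $[\Pi:E]$ large enough that $f([\Pi:E])\geq C$; this is possible by the previous paragraph and the fact that $f\uparrow\infty$. Apply the non-sparse witness to $E$ to obtain a proper open subgroup $E'$ of $E$ with $N\leq E'$ and $[E:E'] < n_0$. If $[E:E']\geq 3$, set $E_1 = E'$. Otherwise $[E:E']=2$; apply the non-sparse witness again to $E'$ to get a proper open $E_1\leq E'$ with $N\leq E_1$ and $[E':E_1]<n_0$, so that $[E:E_1]=2[E':E_1]\geq 4$ and $[E:E_1]<2n_0$. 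In either case $3\leq [E:E_1] < 2n_0$, hence $[E:E_1]!\leq (2n_0)!$ and therefore
\[
f([\Pi:E])\ \geq\ C\ =\ s\cdot(2n_0)!\ \geq\ s\cdot [E:E_1]!,
\]
as required.

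The only real obstacle is engineering $[E:E_1]\geq 3$, since the non-sparse hypothesis a priori permits the witness index to be $2$. This is handled by the one-step iteration in the paragraph above, which at worst doubles the index bound from $n_0$ to $2n_0$ but keeps it independent of $E$, so the factorial remains a constant that $f([\Pi:E])$ can be made to dominate by choosing $E$ of sufficiently large index.
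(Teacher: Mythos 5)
Your proof is correct and takes essentially the same route as the paper's: negate sparseness to get a uniform bound $n_0$ on the witness index, choose $E$ of large enough index so that $f([\Pi:E])$ dominates the constant $s\cdot(2n_0)!$, and iterate the witness one more step when the first index equals $2$ in order to force $[E:E_1]\geq 3$. The only cosmetic difference is that the paper sometimes replaces the top group by the intermediate subgroup in its case analysis and uses the constant $\max\{s\cdot n!,\,s\cdot 4!\}$, whereas you keep $E$ fixed and absorb the worst case into $(2n_0)!$ — immaterial.
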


\begin{proof}
Since $N$ is abundant in $\Pi$, there exist $m,n\in\bbN$ such that for every open subgroup $\Pi_0$ of $\Pi$ containing $N$ with $[\Pi:\Pi_0]\geq m$ there exists
an open subgroup $\Pi_1$ of $\Pi_0$ containing $N$ such that $1<[\Pi_0:\Pi_1]\leq n$.

Let $f\uparrow \infty$ and $s\in \bbN$. By definition, $[\Pi:N]=\infty$. Thus there exists an open
subgroup $\Pi_0$ of $\Pi$ containing $N$ with $f([\Pi:\Pi_0])\geq \max\{s\cdot
n!, s\cdot 4!,f(m)\}$. In particular $f([\Pi:\Pi_0]) \geq f(m)$, thus $[\Pi:\Pi_0]\geq m$. By assumption, $\Pi_0$ has an open subgroup $\Pi_1$
containing $N$ such that $1<[\Pi_0:\Pi_1]\leq n$.

If $[\Pi_0:\Pi_1]\geq 3$, then the subgroups $E=\Pi_0$ and $E_1 = \Pi_1$ satisfy the
conclusion of the lemma. Otherwise, $[\Pi_0:\Pi_1] = 2$. By assumption $\Pi_1$
has an open subgroup $\Pi_2$ containing $N$ such that $1<[\Pi_1:\Pi_2]\leq
n$. If $[\Pi_1:\Pi_2]\geq 3$,  then the subgroups $E=\Pi_1$ and $E_1 = \Pi_2$
satisfy the conclusion of the lemma. Otherwise, $[\Pi_1:\Pi_2] = 2$, thus
$[\Pi_0:\Pi_2] = 4$, so $E=\Pi_0$, $E_1=\Pi_2$ satisfy the conclusion of the
lemma.
\end{proof}

\begin{proof}[Proof of Proposition~\ref{prop:abundant-cond-s}]
Let $f\uparrow \infty$ and $s\in \bbN$. By Lemma~\ref{lem:prep} it suffices to
find open subgroups $E_1\leq E$ of $\Pi$ containing $N$ such that
$f([\Pi:E])\geq s \cdot [E:E_1]!$ and for each $i\in\{1,2\}$ either
\begin{enumerate}\renewcommand{\theenumi}{\roman{enumi}}
\item
\label{cond:1} $N_i\leq E$ or
\item \label{cond:2}$N_i E_1
= \Pi$ and $[E:E_1]\geq 3$.
\end{enumerate}
We distinguish between the four cases:

In the first case we have $[\Pi:NN_1N_2]=\infty$. Then there exists an open subgroup
$E$ of $\Pi$ containing $NN_1N_2$ such that $f([\Pi:E])\geq s$. Put $E_1
= E$. Then $N_1,N_2\leq E$ and $[\Pi:E]\geq s\cdot [E:E_1]!$.

In the second case, we assume that $[\Pi:NN_1N_2]<\infty$ and $NN_1$ is abundant in
$\Pi$. By \cite[Corollary~2.3]{Bary-Soroker2006}, $N N_1$ is abundant in every open subgroup that contains it, so $N N_1$ is abundant in $N N_1 N_2$. Thus we can replace $\Pi$ by $NN_1N_2$ in order to assume that $\Pi = NN_1N_2$; it
suffices to proof \eqref{cond:1} and \eqref{cond:2} for this $\Pi$.
Lemma~\ref{lem:abund-sbgrp} gives open subgroups $E_1 < E$ of $\Pi$ that contain  $NN_1$ for which $f([\Pi:E])\geq s\cdot [E:E_1]!$ and
$[E:E_1]\geq 3$. Then, $N_1\leq E$ and $E_1N_2 = \Pi$.

The third case is the same as the second case, after exchanging the indices $1$ and $2$. 

In the last case we assume that $[\Pi: (NN_1)\cap (NN_2)]<\infty$ and $N$ is
abundant in $\Pi$. In particular $NN_i$ is open in $\Pi$, so 
\begin{equation}
\label{eq:infind}
[N_i:N_i\cap N]=[NN_i:N] = \infty, \qquad i=1,2.
\end{equation}
Let $\Pi' = (NN_1)\cap (NN_2)$. Then since $\Pi'$ is open in $\Pi$, it follows that $N$ is abundant in
$\Pi'$.

Put $N'_1 = N_1\cap \Pi'$ and $N'_2 = N_2\cap \Pi'$. Then $NN'_1 =
NN'_2 = \Pi'$. Since $[N_i:N'_i] < \infty$, by \eqref{eq:infind}, it follows that
$[N'_i:N'_i\cap N] = \infty$.
$$
\xymatrix{%
N_1\ar@{-}[r]&N_1N \ar@{-}[r]&\Pi\\
N'_1\ar@{-}[r]\ar@{-}[u]& \Pi'\ar@{-}[r]\ar@{-}[u]& N_2N\ar@{-}[u]\\
&N'_2\ar@{-}[r]\ar@{-}[u]&N_2\ar@{-}[u] }
$$
Replace $\Pi$ by $\Pi'$, $N_1$ by $N'_1$, and $N_2$ by $N'_2$, if
necessary, to assume that $NN_1 = \Pi$ and $NN_2 = \Pi$; it
suffices to prove  \eqref{cond:1} and \eqref{cond:2} for this $\Pi$. Lemma~\ref{lem:abund-sbgrp}
gives open subgroups $E_1\leq E$ of $\Pi$ containing $N$ with
$f([\Pi:E])\geq s\cdot [E:E_1]!$ and $[E:E_1]\geq 3$. 
Meanwhile, for $i=1,2$,  
\[
\Pi = N N_i \leq E_1 N_i \leq \Pi,
\]
hence these subgroups
satisfy \eqref{cond:2}.
\end{proof}

\subsection{Proof of Theorem~\ref{thm:semifreesubgroup}}
Let $\Pi$ be a profinite surface group of genus $g\geq 2$. We start with two lemmas.

\begin{lemma}\label{lem:semi-free-sparse}
A sparse subgroup of $\Pi$ is semi-free of countable rank.
\end{lemma}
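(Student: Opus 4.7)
The plan is to verify the semi-freeness criterion for $M$: every FSEP for $M$ admits countably many linearly disjoint proper solutions. The rank bound is automatic, since $M$ is a closed subgroup of the topologically finitely generated group $\Pi$, so has rank at most $\aleph_0$.

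Fix a FSEP $\calE = (\mu_1 \colon M \to G_1, \alpha_1 \colon A \rtimes G_1 \to G_1)$ for $M$. First I would choose $D$ open normal in $\Pi$ with $D \cap M \leq \ker \mu_1$, which exists by continuity of $\mu_1$. Next, I would apply the sparseness of $M$---together with the remark that $K$ may be taken of arbitrarily large index in $\Pi$---to pick an open subgroup $K$ of $\Pi$ containing $M$ with $[\Pi : K]$ so large that (by Fact~\ref{fact:gen-grow}) $K$ has enormous genus, while simultaneously every proper open subgroup $L'$ of $K$ containing $M$ satisfies $[K : L'] \geq n$, where $n$ is chosen large compared to $|A|$ and $|G_1|$. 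Then I would pick $L$ open normal in $K$ with $L \leq D \cap K$ and $ML \subsetneq K$, which is possible because $[K : M] = \infty$. Setting $G = K/L$ and $G_0 = ML/L$, sparseness forces $[G : G_0] \geq n$.

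I would then form the induced FSEP $\calE_{ind}$ over $K$ by the construction of Section~\ref{subsec:HS-Ind}. Since the genus of $K$ dwarfs $2|A \wr_{G_0} G|^3$, Lemma~\ref{lem:large-g-sol} yields a proper solution $\psi \colon K \to A \wr_{G_0} G$. Applying Proposition~\ref{prop:main}, with $\Pi$ replaced by $K$ and a suitable closed normal subgroup $\calN$ of $K$ inside $M \cap L$ (the normal core of $M \cap L$ in $K$ being a natural choice), would transfer $\psi$ to a proper solution $\psi^{ind}$ of $\calE$.

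The main obstacle is verifying the non-solvability hypothesis of Proposition~\ref{prop:main}: no nontrivial quotient $\bar A$ of $A$ admits a proper solution of $\bar\calE_{ind,\calN}$. Here the large index $[G : G_0] \geq n$ plays the role that the diamond $N_1, N_2$ played in Lemma~\ref{lem:diam-prec}: a proper surjection onto $\bar A \wr_{G_0} G$ would require independent data in $|\bar A|^{[G:G_0]}$ coordinates permuted by $G$, and for $n$ large relative to $|A|$ the combinatorics of the twisted wreath product (combined with the constraint of factoring through $\calN$) precludes this. Finally, to obtain countably many linearly disjoint proper solutions, iterate the construction by replacing $D$ at step $k+1$ with $D \cap D_k$, where $D_k$ is an open normal subgroup of $\Pi$ whose intersection with $M$ lies in the kernel of the joint map induced by the first $k$ solutions; this produces at stage $k+1$ a proper solution independent of the previous ones, establishing semi-freeness of countable rank.
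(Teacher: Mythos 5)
Your reduction to solving a single FSEP is fine (the paper handles the ``countably many linearly disjoint solutions'' issue by citing \cite[Lemma~3.4]{Bary-SorokerHaranHarbater}, so your closing iteration is unnecessary), but the core of your argument has a genuine gap. You route the proof through the Haran--Shapiro induction and Proposition~\ref{prop:main}, and you yourself identify the sticking point: you must show that for \emph{every} nontrivial quotient $\Agag$ of $A$ the problem $\bar\calE_{ind,\calN}$ is \emph{not} properly solvable. Your heuristic --- that a surjection onto $\Agag\wr_{G_0}G$ needs too much ``independent data'' in $|\Agag|^{[G:G_0]}$ coordinates --- points in the wrong direction. Large genus makes twisted wreath-product embedding problems \emph{more} solvable, not less (that is exactly Lemma~\ref{lem:large-g-sol}), and $K/\calN$ is itself (a quotient of) a surface group of large genus when $\calN$ is small, so the induced problem modulo $\calN$ will typically \emph{be} properly solvable. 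In the diamond situation of Lemma~\ref{lem:diam-prec} the non-solvability is extracted from the two normal subgroups $N_1,N_2$ via a commutator argument; a sparse subgroup comes with no such normal subgroups, and nothing in your sketch replaces that mechanism. (There is also a secondary circularity: $|A\wr_{G_0}G|$ depends on $[K:L]$, which you only control after choosing $K$, while the genus of $K$ must already exceed $2|A\wr_{G_0}G|^3$; this can be patched by bounding $[K:L]$ in terms of $[\Pi:D]$ first, but the main gap remains.)

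The paper's proof avoids induction entirely, and this is the idea you are missing: extend $\mu$ to $\mu'$ on $H=ND$, then use sparseness to choose an open $H_0$ with $N\leq H_0\leq H$, $[\Pi:H_0]\geq 2|A|^3$, and such that every proper open subgroup of $H_0$ containing $N$ has index $>|A|$ in $H_0$. By Fact~\ref{fact:gen-grow} and Lemma~\ref{lem:large-g-sol} the problem $(\mu'|_{H_0},\alpha)$ has a proper solution $\gamma\colon H_0\to A$ over $H_0$ itself (no wreath product needed), and then $N\ker\gamma$ is an open subgroup of $H_0$ containing $N$ of index at most $|A|$, so sparseness forces $N\ker\gamma=H_0$, i.e.\ $\gamma(N)=A$ and $\gamma|_N$ properly solves the original problem. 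Sparseness is used to force surjectivity of the restriction, not to obstruct solvability of an induced problem.
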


\begin{proof}
Assume $N\leq \Pi$ is sparse.
Since $\Pi$ is finitely generated, the rank of $N$ is at most $\aleph_0$. Thus is suffices to solve any finite split embedding problem  $(\mu\colon N \to B,
\alpha \colon A\to B)$ for $N$ \cite[Lemma~3.4]{Bary-SorokerHaranHarbater}. 

Choose an open normal subgroup $D\normal \Pi$ with
$D\cap N\leq \ker \mu$ and set $H = N D$. Then $H$ is open in $\Pi$
and $\mu$ extends to an epimorphism $\mu'\colon H \to B$ by setting
$\mu'(nd)=\mu(n)$ for all $n\in N$, $d\in D$. 

Since $N$ is sparse in $\Pi$, by \cite[Lemma~2.2]{Bary-Soroker2006}, there is an open subgroup $H_0$ of $H$ that contains $N$ such that $[\Pi:H_0] \geq 2|A|^3$ and every proper open subgroup $N\leq H_1 \lneq H_0$ satisfies  
$[H_0:H_1] > |A|$. Note that $\mu_0 = \mu'|_{H_0}$ is surjective, since $\mu'(H_0) \geq \mu'(N) = B$. 

By Fact~\ref{fact:gen-grow}, we get that $H_0$ is a profinite surface group of  genus 
\[
g_0 = [\Pi:H_0] (g-1) + 1 > [\Pi:H_0] \geq 2|A|^3.
\]
By Lemma~\ref{lem:large-g-sol}, the split embedding problem 
\[
(\mu_0\colon H_0 \to B ,\alpha\colon A\to B)
\] 
is solvable; let $\gamma\colon H_0 \to A$ be a solution.
It suffices to show that $\gamma(N) = A$, or equivalently $N\ker \gamma = H_0$, since then $\gamma|_{N}$ is a solution of $(\mu,\alpha)$.
Indeed, as $[H_0 : \ker \gamma N] \leq |A|$, by the way we chose $H_0$ we have $\ker \gamma N = H_0$.
\end{proof}

\begin{lemma}
\label{lem:semi-free-abundant}
Assume $N$ satisfies Condition ($\diamond$) in $\Pi$. Then $N$ is semi-free. 
\end{lemma}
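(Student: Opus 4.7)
The plan is to invoke the same route used in Lemma~\ref{lem:semi-free-sparse}: since $N$ sits inside the finitely generated group $\Pi$, its rank is at most $\aleph_0$, so by \cite[Lemma~3.4]{Bary-SorokerHaranHarbater} it suffices to show that every finite split embedding problem $\calE = (\mu_1\colon N\to G_1,\ \alpha_1\colon A\rtimes G_1\to G_1)$ is properly solvable. To solve a fixed such $\calE$, the strategy is to use Condition~($\diamond$) to find an open subgroup $E$ of $\Pi$ of very large genus, together with an open normal subgroup $L$ of $E$ with controlled index, then induce $\calE$ up to a finite split embedding problem $\calE_{ind}$ for $E$, and finally solve $\calE_{ind}$ via the genus bound of Lemma~\ref{lem:large-g-sol}. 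The descent from a proper solution of $\calE_{ind}$ to a proper solution of $\calE$ will be handled by Lemma~\ref{cor:bhh}.

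Concretely, first I would feed Condition~($\diamond$) with $N' = \ker\mu_1$ (open in $N$), with $r = 1$, and with an increasing $f\uparrow\infty$ chosen small enough that $f(x)\geq n$ forces $x \geq 2|A|^{3n} n^3$ — for instance, let $f(x)$ be the largest integer $n$ with $2|A|^{3n} n^3 \leq x$. Condition~($\diamond$) then yields normal subgroups $N_1,N_2\normal\Pi$ and a chain $L\leq E_0\leq E\leq \Pi$ with $L$ normal in $E$, $N\leq E_0$, $N\cap L\leq N' = \ker\mu_1$, $[N_1\cap E:N_1\cap E_0]\geq 3$, $[N_2\cap E:N_2\cap E_0]\geq 2$, and $f([\Pi:E])\geq [E:L]$.

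Next I would apply Lemma~\ref{cor:bhh} with $\Pi$ replaced by $E$ and $N_i$ replaced by $N_i\cap E$ (which are normal in $E$, still satisfy $(N_1\cap E)\cap(N_2\cap E)\leq N$, and satisfy the required index bounds $[N_i\cap E:(N_i\cap E)\cap N]\geq [N_i\cap E:N_i\cap E_0]$ because $N\leq E_0$). Since $L\leq E_0$ gives $NL\leq E_0$, the second isomorphism theorem produces
\[
[(N_i\cap E)NL:NL] \;=\; [N_i\cap E:(N_i\cap E)\cap NL] \;\geq\; [N_i\cap E:N_i\cap E_0],
\]
which supplies conditions (ii)--(iv) of Lemma~\ref{cor:bhh}; condition~(i) is $L\cap N\leq \ker\mu_1$, built into our choice of $N'$. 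Hence Lemma~\ref{cor:bhh} reduces the problem to properly solving the induced FSEP $\calE_{ind} = (\mu\colon E\to G,\ \alpha\colon A\wr_{G_0}G\to G)$, where $G = E/L$ and $G_0 = NL/L$.

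Finally, by Fact~\ref{fact:gen-grow}, $E$ is itself a profinite surface group of genus $g_E = [\Pi:E](g-1)+1 \geq [\Pi:E]$. The kernel group has size bounded by $|A\wr_{G_0}G|\leq |A|^{[E:L]}[E:L]$, and the choice of $f$ was engineered so that $[E:L]\leq f([\Pi:E])$ forces
\[
g_E \;\geq\; [\Pi:E] \;\geq\; 2|A|^{3[E:L]}[E:L]^3 \;\geq\; 2|A\wr_{G_0}G|^3.
\]
Lemma~\ref{lem:large-g-sol} then supplies a proper solution of $\calE_{ind}$, and Lemma~\ref{cor:bhh} pushes it down to a proper solution of $\calE$, completing the proof. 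The main technical point is the quantitative matching: one has to pick $f$ slow enough that Condition~($\diamond$) actually produces an $E$ whose genus dwarfs the triple exponential $2|A\wr_{G_0}G|^3$ appearing in Lemma~\ref{lem:large-g-sol}. Everything else is bookkeeping and the transfer of hypotheses from $\Pi$ to the open subgroup~$E$.
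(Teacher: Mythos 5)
Your proposal is correct and follows essentially the same route as the paper's proof: reduce to solving a single FSEP, feed Condition~($\diamond$) a function $f$ calibrated against the genus bound of Lemma~\ref{lem:large-g-sol} (the paper takes $f=\log$ and absorbs the constant into $r$, you take $r=1$ and a step-function $f$ — equivalent up to the cosmetic issue of making $f$ genuinely increasing), pass to $E$ with $N_i\cap E$, verify the hypotheses of Lemma~\ref{cor:bhh} via $NL\leq E_0$, and solve $\calE_{ind}$ by the genus estimate. The only point stated more tersely than in the paper is condition (iv), $[E:NL]\geq 3$, which follows as in the paper from $[E:NL]\geq[E:E_0]\geq[N_1\cap E:N_1\cap E_0]\geq 3$.
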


\begin{proof}
Since $\Pi$ is finitely generated  we get that $N$ is countably generated. Hence it suffices to show that every finite split embedding problem
\[
\calE = (\mu_1 \colon N\to G_1, \alpha_1 \colon A\rtimes G_1\to G_1)
\]
with $A\neq 1$ is solvable. 

Let $f(x)=log x$ and take $N' = \ker \mu_1$. Choose $r$ such that 
\begin{equation}
\label{eq:inequality}
e^{r y} \geq 2|A|^{3y} y^3,\  \forall y\geq 2.
\end{equation}
By Condition~($\diamond$), there exist normal subgroups $N_1, N_2$ of $\Pi$ such that $N_1\cap N_2 \leq N$, $[N_1:N_1\cap N]\geq 3$, $[N_2:N_2\cap N]\geq 2$ and a diagram of subgroups 
\[
\xymatrix{
\ker \alpha_1\ar@{-}[r]
	&N\ar@{-}[r]
		&E_0\ar@{-}[r]
			&E\ar@{-}[r]
				&\Pi\\
N\cap L\ar@{-}[rr]\ar@{-}[u]
		&&L\ar@{-}[u]
}
\]
satisfying the following conditions:
\begin{enumerate}
\item $L\leq E_0\leq E$ are open in $\Pi$.
\item $L$ is normal in $E$.
\item $[N_1\cap E : N_1 \cap E_0]\geq 3$.
\item $[N_2\cap E:N_2  \cap E_0]\geq 2$.
\item $\log([\Pi:E]) \geq r \cdot [E:L]$, or equivalently, $[\Pi:E] \geq e^{r\cdot [E:L]}$.
\end{enumerate}
Then $E$ is a surface group of genus
\begin{equation}
\label{eq:large-ind}
g_0 =[ \Pi:E] (g-1) +1 \geq e^{r\cdot [E:L]}\geq 2|A|^{3[E:L]} [E:L]^3.
\end{equation}

Let $N_i' = N_i\cap E$. 
We apply Lemma~\ref{lem:diam-prec} with $E$ replacing $\Pi$ and $N_i'$ replacing $N_i$. Let 
$G = E/L$, $G_0 = NL/L\cong N/N\cap L$, and 
\[
\calE_{ind} = (\mu \colon E \to G, \alpha \colon A\wr_{G_0} G \to G)
\]
the induced embedding problem. We claim that the conditions of the lemma are satisfied. Indeed, $L\cap N\leq \ker \mu_1$ by the diagram. By \eqref{prf:diam-large-ind} and since $L\leq E_0$ 
we have
\[
[N_i'NL:NL] = [N_i' : N_i'\cap NL] = [N_i\cap E : (N_i\cap E)\cap NL]\geq  [N_i\cap E : N_i\cap E_0].
\] 
Finally 
\[
[E:NL] \geq [E:E_0] \geq [ N_1\cap E :  N_1\cap E_0]\geq 3.
\]
Now by \eqref{eq:large-ind} we have that $g_0 \geq 2|A\wr_{G_0} G|^3$, hence by  Lemma~\ref{lem:large-g-sol}, $\calE_{ind}$ is properly solvable, and thus by Lemma~\ref{lem:diam-prec} so is  $\calE$.
\end{proof}

\begin{proof}[Proof of Theorem~\ref{thm:semifreesubgroup}]
First we may assume that $[N_1 N:N] = \infty$. Indeed, if $[N_1 N: N] < \infty$, then $\Pi$ has an open subgroup $N_2'$ such that $N_2'\cap (N_1N)\leq N$. Then $N_1\cap N_2' \leq N$ and $[N_2'N:N]=\infty$. Replace $N_1$ with $N_2'$ and $N_2$ with $N_1$ to get the assumption.

Note that by Lemma~\ref{lem:semi-free-abundant}, if one of the conditions of Proposition~\ref{prop:abundant-cond-s} is satisfied, then $N$ is semi-free. Hence we assume that none of them holds.  

If $[\Pi : (NN_1) \cap (NN_2)] < \infty$, then the negation of Condition~\ref{composition_lemma_d} of Proposition~\ref{prop:abundant-cond-s} gives that $N$ is sparse in $\Pi$. Hence $N$ is free of countable rank (Lemma~\ref{lem:semi-free-sparse}). 
Assume that $[\Pi : (NN_1) \cap (NN_2)] = \infty$. W.l.o.g. $[\Pi:NN_1]=\infty$. Then the negation of (\eqref{composition_lemma_a} $\vee$ \eqref{composition_lemma_b}) gives that $NN_1$ is sparse in $\Pi$. Then Lemma~\ref{lem:semi-free-sparse} gives that $NN_1$ is free of countable rank. 

Put $N'_2 = (NN_1)\cap N_2$. Then $N_1, N'_2\normal NN_1$, $N_1\cap N'_2\leq
N$ and $N_1\not \leq N$. If $N'_2\not\leq N$, then the diamond theorem for free groups (\cite[Theorem~25.4.3]{FriedJarden2005}) gives that 
$N$ is free of countable rank. 

We are left with the case  $N'_2\leq N$. Then
$N=NN_1\cap NN_2$. By the negation of \eqref{composition_lemma_a} of Proposition~\ref{prop:abundant-cond-s}, $[\Pi:NN_1N_2] < \infty$, hence
\[
[\Pi:NN_2]=[\Pi:NN_1N_2][NN_1:N]=[N_1:N_1\cap N]=\infty.
\]
\[
\xymatrix{
N_2\ar@{-}[r]
	&NN_2\ar@{-}[r]^{\infty}
		&NN_1N_2\ar@{-}[r]^-{<\infty}
			&\Pi\\
N_2'\ar@{-}[u]\ar@{-}[r]
	&N\ar@{-}[u]\ar@{-}[r]^{\infty}
		&NN_1\ar@{-}[u]
}
\]
The negation of \eqref{composition_lemma_c} of Proposition~\ref{prop:abundant-cond-s} gives that $NN_2$ is sparse in $\Pi$, hence in the open subgroup$NN_1N_2$ of $\Pi$ (\cite[Corollary~2.3]{Bary-Soroker2006}). 
But since $NN_1/N_2' \cong NN_1N_2/N_2$, this implies that $N$ is sparse in the free group $NN_1$, and hence $N$ is free of countable rank (\cite[Lemma~2.4]{Bary-Soroker2006}).
\end{proof}

\section{Applications}
\subsection{Proof of Theorem~\ref{thm:sub-semi}}
Let $\Pi$ be a surface group of genus $g\geq 2$ and let $N\lhd \Pi$ be a normal subgroup of infinite index such that $\Pi/N$ is not hereditary just infinite . We need to prove that $N$ is contained in a semi-free normal subgroup. 

If there exists a normal subgroup $N\lneqq M\normal \Pi$ with $[\Pi:M]=\infty$, then there exists $N\leq U\lhd \Pi$ open in $\Pi$ such that $M\cap U \neq M$ (recall that $N$ is the intersection of all open subgroups containing it). 
So $M\cap U$ is semi-free by Theorem~\ref{thm:semifreesubgroup}, and we are done.

Therefore we can assume that $J = \Pi/N$ is just infinite. By \cite[Theorem~3(b)]{Grigorchuk2000}, there exists an open normal subgroup $J_0$ of $J$ such that either $J_0$ is hereditarily just infinite, which is not possible by assumption, or $J_0 = K_1\times K_2$, where $K_i$ is infinite group, $i=1,2$.

Let $\Pi_0, N_1,N_2$ be the respective preimages of $J_0,K_1.K_2$ under the map $\Pi\to J$. Then $\Pi_0$ is a surface group of genus $\geq 2$ and $N = K_1\cap K_2$. So by Theorem~\ref{thm:semifreesubgroup}, $N$ is semi-free.
\qed

\begin{remark}
Let $N$ be a normal subgroup of $\Pi$ such that $\Pi/N$ is hereditarily just infinite. We do not know whether $N$ is necessarily semi-free.
\end{remark}

\subsection{Proof of Corollary~\ref{cor:LubvdDries}}
 Let $\Pi$ be a surface group of genus at least $2$, $M$ a normal subgroup of $\Pi$ of infinite index, and $N$ a proper open subgroup of $M$. There exists an open normal subgroup $U\lhd \Pi$ such that $U\cap M\leq N$, so by the Theorem~\ref{thm:semifreesubgroup}, $N$ is semi-free. 
\qed

\subsection{Some examples} 
\label{sec:examples}
Let $\Pi$ be a surface group of genus at least $2$ and $N$ a closed subgroup of infinite index. 
The following result provides many interesting examples of semi-free subgroups of a surface group.

\begin{proposition}
If $N\lhd \Pi$ and every open subgroup of $\Pi/N$ is generated by $d$ elements, for some $d\geq 1$, or if $N$ is sparse in $\Pi$, then $N$ is semi-free. 
\end{proposition}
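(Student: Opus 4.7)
My plan is to handle the two disjunctive hypotheses separately.

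First, if $N$ is sparse in $\Pi$, then Lemma~\ref{lem:semi-free-sparse} immediately yields that $N$ is semi-free of countable rank, so there is nothing further to do.

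Suppose instead that $N \lhd \Pi$ and every open subgroup of $J := \Pi/N$ is generated by $d$ elements for some fixed $d \geq 1$. The plan is to exhibit normal subgroups $K_1, K_2 \lhd \Pi$ satisfying $K_1 \cap K_2 \leq N$ and $K_i \not\leq N$, and then invoke Theorem~\ref{thm:semifreesubgroup}. Since $J$ is an infinite profinite group of finite rank, the structure theory of finite-rank profinite groups (Lubotzky--Mann) provides an open characteristic pronilpotent subgroup of $J$, which is a finite direct product of pro-$p$ analytic groups of bounded dimension. Replacing $\Pi$ by the preimage $\Pi_0$ of this subgroup---still a profinite surface group of genus $\geq 2$ by Fact~\ref{fact:gen-grow}---I may assume that $\Pi_0/N$ is pro-$p$ analytic for some prime $p$.

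The key step is then to produce, inside the pro-$p$ analytic quotient $\Pi_0/N$, an epimorphism onto $\mathbb{Z}_p^2$. Once such an epimorphism $\Pi_0 \to \mathbb{Z}_p^2$ is available (factoring through $\Pi_0/N$), I take $K_1$ and $K_2$ to be the preimages in $\Pi_0$ of the two coordinate $\mathbb{Z}_p$-subgroups of $\mathbb{Z}_p^2$. Each $K_i$ is normal in $\Pi_0$, strictly contains $N$, and $K_1\cap K_2 = N$, so the diamond hypothesis holds. Theorem~\ref{thm:semifreesubgroup} applied inside $\Pi_0$ then yields that $N$ is semi-free of countable rank. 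This is essentially the same reduction already exploited in the proof of Theorem~\ref{thm:sub-semi}, where a Grigorchuk-type dichotomy produces an open subgroup of $J$ with a direct product structure whose preimages serve as $K_1, K_2$.

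The main obstacle is guaranteeing the existence of a $\mathbb{Z}_p^2$-quotient: this can fail precisely when $J$ is hereditarily just infinite of rank one, most notably when $J \cong \mathbb{Z}_p$, which is the case flagged as open in the remark following Theorem~\ref{thm:sub-semi}. In such a situation no direct diamond configuration is available from abelianization, and the bounded-rank hypothesis would need to be deployed more subtly---for instance via a polynomial count of open subgroups of $J$ showing directly that $N$ is sparse in $\Pi$, thereby reducing to the sparse case.
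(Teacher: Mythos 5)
The sparse half of your argument is fine: Lemma~\ref{lem:semi-free-sparse} states exactly that a sparse subgroup of $\Pi$ is semi-free of countable rank, so that case is immediate (the paper reproves it here only as the $n=1$ instance of its main construction).

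The bounded-rank half has a genuine gap, and you have put your finger on it yourself: the reduction to Theorem~\ref{thm:semifreesubgroup} requires a diamond $K_1\cap K_2\leq N$, $K_i\not\leq N$, and no such diamond exists when $\Pi/N$ is hereditarily just infinite --- in particular when $\Pi/N\cong\mathbb{Z}_p$, which is Example~(1) and is the principal case this Proposition is designed to cover (it is precisely the case left open in the remark after Theorem~\ref{thm:sub-semi}). Your proposed fallback, a subgroup count showing $N$ is sparse, cannot work either: for $\Pi/N\cong\mathbb{Z}_p$ every open $K\supseteq N$ satisfies $K/N\cong\mathbb{Z}_p$ and hence has a proper open subgroup of index $p$ containing $N$, so $N$ is \emph{not} sparse. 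There is also a secondary flaw in the diamond step: if $\Pi_0/N$ merely surjects onto $\mathbb{Z}_p^2$, the preimages $K_1,K_2$ of the coordinate factors satisfy $K_1\cap K_2=\ker(\Pi_0\to\mathbb{Z}_p^2)\supseteq N$, with equality only when $\Pi_0/N\cong\mathbb{Z}_p^2$; otherwise Theorem~\ref{thm:semifreesubgroup} only makes an overgroup of $N$ semi-free. The paper avoids diamonds entirely. Given a FSEP $(\mu_0\colon N\to A,\ C\rtimes A\to A)$, it extends $\mu_0$ to $\mu$ on an open subgroup of large index, uses Lemma~\ref{lem:large-g-sol} and Fact~\ref{fact:gen-grow} to properly solve the amplified problem $(\mu,\ C^n\rtimes A\to A)$ for $n>((|C||A|)!)^d$, and projects to get solutions $\psi_1,\dots,\psi_n$ of the original problem whose kernels $L_i$ satisfy $L_iL_j=\ker\mu$ for $i\neq j$. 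If every $\psi_i|_N$ failed to be surjective, the groups $L_iN$ would be $n$ distinct proper open subgroups of index at most $|C||A|$ containing $N$, contradicting the bound $((|C||A|)!)^d$ on the number of such subgroups in a group of rank $d$. Some argument of this direct, non-diamond type is unavoidable here.
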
 

\begin{proof}
Let 
\[
\calE(N) = (\mu_0\colon N\to A, \alpha_0\colon C\rtimes A \to A) 
\]
be a FSEP for $N$. Assume first that every open subgroup of $\Pi/N$ is generated by $d$ elements. Let $r,n\geq 1$ be given such that $n> ((|C||A|)!)^d$ and $r\geq 2|A|^3|C|^{3n}$.

Let $L\lhd \Pi$ be an open subgroup of $\Pi$ such that $L\cap N \leq \ker\mu_0$. Let $\tilde \Pi$ be an open subgroup of $\Pi$ such that $N\leq \tilde \Pi \leq L N$ and such that $[\Pi:\tilde\Pi] \geq r$. Then we can extend $\mu_0$ to $\mu\colon \tilde \Pi\to A$ by $\mu(nl) = \mu_0(n)$, for every $n\in N$, $l\in L$, for which $nl\in \tilde \Pi$. By Fact~\ref{fact:gen-grow} the genus of $\tilde \Pi$ is at least $r$. Without loss of generality we can replace $\Pi$ with $\tilde\Pi$ to assume $\mu$ is defined and $g\geq r$.  (Note that the rank of  $\tilde\Pi/N$ is bounded by the rank of $\Pi/N$.)

Consider the FSEP 
\[
\calE_n(\Pi) = (\mu\colon \Pi\to A, \alpha\colon C^n\rtimes A \to A),
\]
where $A$ acts component-wise on $C^n$. Since $g\geq r \geq 2|A|^3|C|^{3n}$, by Lemma~\ref{lem:large-g-sol}, there exists a proper solution $\Psi\colon \Pi \to C^n\rtimes A$ of $\calE_n(\Pi)$. For each $i=1,\ldots, n$, let $\psi_i$ be the composition of $\Psi$ with the projection $C^n\rtimes A\to C\rtimes A$ on the $i$th coordinate. Let $L_i = \ker \psi_i$. Then $L_iL_j=\ker\mu$, for every $i\neq j$.

If $L_i  N = \Pi$ for some $i$, then $\psi_i(N) = C\rtimes A$, so $\psi_i|_{N}$ is a proper solution of $\mathcal{E}(N)$, and we are done. 

Otherwise, assume that $L_i  N \neq \Pi$ for every $i$. But since $(L_i N) (L_j N) = (L_i L_j) N = \ker\mu N = \Pi$, we get that $L_i N$ are distinct subgroups of index $\leq |C||A|$. So $\Pi/N$ has at least $n> ((|C||A|)!)^d$ open subgroups of index  $\leq |C||A|$. This is a contradiction because each such a subgroup induces a distinct homomorphism to the symmetric group $S_{|C||A|}$ defined by the action on the cosets, and the number of these homomorphisms is bounded by $ ((|C||A|)!)^d$.

Next assume that $N$ is sparse in $\Pi$. Replace $\Pi$ by an open subgroup $\tilde \Pi$ of index $[\Pi:\tilde \Pi]\geq 2 |C|^3|A|^3$ that  contains $N$ such that $\tilde \Pi$ has no proper subgroups of index $\leq |C||A|$ that contain $N$. Then arguing as above with $n=1$, we get that $L_1 N \leq \Pi$ and $[\Pi:L_1N]\leq |C||A|$, so $L_1N = \Pi$. So $\psi_1|_{N}$ is a proper  solution of $\mathcal{E}(N)$. 
\end{proof}

\begin{examples}
Each of the following conditions implies that $N$ is semi-free. 
\begin{enumerate}
\item $\Pi/N = \mathbb{Z}_p$ (every subgroup is cyclic)
\item $\Pi/N = K_1 \times K_2$ ($N$ is the intersection of the preimages of $K_1,K_2$ in $\Pi$, hence by Theorem~\ref{thm:semifreesubgroup}, is semi-free). 
\item $\Pi/N$ is abelian ($\Pi/N$ is either $\mathbb{Z}_p$ or direct product). 
\item $\Pi/N$ is pro-nilpotent but not pro-$p$ ($\Pi/N$ is a direct product). 
\item $[\Pi:N] = \prod_{p} p^{n(p)}$, where $0\leq n(p)<\infty$ (this implies that $N$ is sparse in $\Pi$). 
\end{enumerate}
\end{examples}

Notice that (2) gives a new proof that the congruence kernel of an arithmetic lattice in $SL_2(\mathbb{R})$ is a free profinite group of countable rank, see \cite{Zalesskii2005} for more details.

\bibliographystyle{amsplain}
\providecommand{\bysame}{\leavevmode\hbox to3em{\hrulefill}\thinspace}
\providecommand{\MR}{\relax\ifhmode\unskip\space\fi MR }
\providecommand{\MRhref}[2]{%
  \href{http://www.ams.org/mathscinet-getitem?mr=#1}{#2}
}
\providecommand{\href}[2]{#2}

\end{document}